\documentclass[12pt, reqno]{amsart}%
\usepackage{amsmath, amsthm, amscd, amsfonts, amssymb, graphicx, color}
\usepackage[bookmarksnumbered, colorlinks, plainpages]{hyperref}
\usepackage{amsmath}
\usepackage{amsfonts}
\usepackage{amssymb}
\usepackage{graphicx}%
\setcounter{MaxMatrixCols}{30}
\providecommand{\U}[1]{\protect\rule{.1in}{.1in}}
\makeatletter
\@namedef{subjclassname@2020}{	\textup{2020} Mathematics Subject Classification}
\makeatother
\textheight 22.5truecm \textwidth 14.5truecm
\setlength{\oddsidemargin}{0.35in}\setlength{\evensidemargin}{0.35in}
\setlength{\topmargin}{-.5cm}
\newtheorem{theorem}{Theorem}[section]
\newtheorem{lemma}[theorem]{Lemma}
\newtheorem{proposition}[theorem]{Proposition}
\newtheorem{corollary}[theorem]{Corollary}
\theoremstyle{definition}
\newtheorem{definition}[theorem]{Definition}

\newtheorem{question}[theorem]{Question}

\newtheorem{remark}[theorem]{Remark}
\numberwithin{equation}{section}

\newcommand{\diam}{\mathop{\mathrm{diam}}}
\newcommand{\be}{\begin{equation}}
\newcommand{\ee}{\end{equation}}

\begin{document}
\title[On Difference Graph]{On the Difference of the Enhanced Power Graph and the Power Graph of a finite Group}

\author[Biswas]{Sucharita Biswas}
\email{sucharita.rs@presiuniv.ac.in }
\address{Department of Mathematics\\ Presidency University, Kolkata, India} 

\author[Cameron]{Peter J. Cameron}
\email{pjc20@st-andrews.ac.uk}
\address{School of Mathematics and Statistics\\ University of St. Andrews, U.K} 

\author[Das]{Angsuman Das$^{\flat}$}
\email{angsuman.maths@presiuniv.ac.in}
\address{Department of Mathematics\\ Presidency University, Kolkata, India} 

\author[Dey]{Hiranya Kishore Dey}
\email{hiranya.dey@gmail.com}
\address{Department of Mathematics\\ Indian Institute of Science, Bangalore, India} 
\subjclass[2020]{05C25, 05C17}
\keywords{power graph, enhanced power graph, finite group}
\thanks{$^\flat$Corresponding author}

\begin{abstract}
 The difference graph $D(G)$ of a finite group $G$ is the difference of enhanced power graph of $G$ and power graph of $G$, with all isolated vertices are removed. In this paper we study the connectedness and perfectness of $D(G)$ with respect to various properties of the underlying group $G$. We also find several connection between the difference graph of $G$ and the Gruenberg–Kegel graph of $G$.
\end{abstract}
\maketitle
\setcounter{page}{1}
\section{Introduction}

The study of graphs related to various algebraic structures has been a topic of increasing interest during the last two decades. This kind of study help us to  (1) characterize the resulting graphs, (2) 
characterize the algebraic structures with isomorphic graphs, and (3) also to realize 
the interdependence between the algebraic structures and the corresponding graphs. Many different types of graphs, including among many others the
commuting graph~\cite{bf}, generating graph~\cite{gk},
power graph~\cite{chakraborty-ghosh-sen-power,kscc}, enhanced power graph 
\cite{akbari-cameron,sudip-hiranya,bera-hiranya-mukherjee-power}, and
comaximal subgroup graph \cite{das-saha-comaximal}, have been introduced to explore the properties of algebraic structures using graph theory.
The concept of a power graph was introduced in the context of semigroup theory by Kelarev and Quinn~\cite{Kelarev-Quin-cotrib-general}. 

\begin{definition}
Let $G$ be a group. The power graph $\mathsf{Pow}(G)$ is an undirected graph defined on $G$ as the set of vertices, in which  two vertices $a$ and $b$ are adjacent if $a$ is a power of $b$ or $b$ is a power of $a$, i.e., $a\in \langle b\rangle$ or $b\in \langle a\rangle$. 
\end{definition}

The enhanced power graph of a group was introduced by Alipour et al. in \cite{akbari-cameron} as follows.  

\begin{definition}
Let $G$ be a group. The enhanced power graph $\mathsf{EPow}(G)$ is an undirected graph defined on $G$ as the set of vertices and two vertices $a$ and $b$ are adjacent if there exists $c \in G$ such that both $a$ and $b$ are powers of $c$, i.e., $a,b \in \langle c\rangle$, i.e., if $\langle a,b \rangle$ is a cyclic group.
\end{definition}

Recently, in a survey, Cameron \cite{survey} introduced various open questions on graphs defined on groups. One of them is regarding the difference of enhanced power graph and power graph of a group.

From Proposition 2.6 in \cite{survey}, we see that both $\mathsf{Pow}(G)$ and $\mathsf{EPow}(G)$ are graphs on same vertex set $G$ and $E(\mathsf{Pow}(G))\subseteq E(\mathsf{EPow}(G))$. $\mathsf{EPow}(G)-\mathsf{Pow}(G)$ denotes the graph with $G$ as the set of vertices and two vertices $a$ and $b$ are adjacent if they are adjacent in $\mathsf{EPow}(G)$ but not adjacent in $\mathsf{Pow}(G)$. Motivated by Section 3.2 of \cite{survey}, we define the following graph;

\begin{definition}
Let $G$ be a group. The difference graph $D(G)$ is defined to be the graph $\mathsf{EPow}(G)-\mathsf{Pow}(G)$, with isolated vertices removed.
\end{definition}

Given this, we need to understand the set of isolated vertices which are
removed, that is, vertices which have the same neighbourhoods in 
$\mathsf{Pow}(G)$ and $\mathsf{EPow}(G)$. This is done in the next two
sections, where we also note a connection between $D(G)$ and the
Gruenberg--Kegel graph of $G$. (This graph, sometimes called the prime graph,
is connected with several graphs defined on $G$, as discussed in the
survey~\cite{cm}.)

The next main result of this work shows the universality of the difference graph $D(G)$. Theorem \ref{universal-theorem} shows that given any finite graph $\Gamma$, there exists a finite abelian group $G$ such that $\Gamma$ is an induced subgraph of $D(G)$. 

We next concentrate on the connectedness of the difference graph. There have been many works on connectivity of various graphs in recent times. 
The authors Aalipour et al. in \cite[Question 40]{akbari-cameron} asked about the connectivity of
power graphs when all the dominating vertices are removed. Later, Cameron and
Jafari \cite{cameron-jafari-power}  answered this question for power graphs and Bera {\it et.al.} \cite{bera-hiranya-mukherjee-power} answered this question for enhanced power graphs. Regarding connectedness of $D(G)$, our main results are
\begin{itemize}
	\item if $G$ is a finite group which is not a $p$-group and with non-trivial center, then $D(G)$ is connected. (Theorem \ref{non-trivial-center})
	\item if $G_1,G_2, G_3$ be three finite groups such that $G_1\times G_2 \times G_3$ is not a $p$-group, then $D(G_1\times G_2 \times G_3)$ is connected. (Theorem \ref{direct-product-theorem})
	\item $D(S_n)$ is connected if and only if $n \geq 8$. (Theorem \ref{Sn-connected-theorem})
	\item If $n\geq 18$, $D(A_n)$ is connected. (Theorem \ref{An-connected-theorem})
	\item $D(D_n)$ is a connected graph if and only if $n$ is not a prime power. (Theorem \ref{Dn-connected-theorem})
	\item  $D(D_n \times D_m)$ is disconnected if and only if $n$ and $m$ are powers of same odd prime. (Theorem \ref{direct-product-dihedral})
\end{itemize}

Moving on, we are also interested in the perfectness of the difference graph $D(G)$. The motivation for studying the perfectness stems from the fact that the power graph of a finite group is always perfect (Theorem 5, \cite{power-perfect}) but the question that for which finite groups, the enhanced power graph is perfect is still unresolved, although the chromatic number of the enhanced power
graph is now known~\cite{cp}, and these graphs are weakly perfect.
 
In this context we prove the following results:
\begin{itemize}
	\item $D(\mathbb{Z}_n)$ is perfect. (Theorem \ref{cyclic-perfect}).
	\item Let $G$ be a group of order $pq,p^2q,p^2q^2,p^3q$ or $pqr$, where $p,q,r$ are distinct primes. Then $D(G)$ is perfect. (Theorem \ref{pqr-perfect})
	\item We also classify the finite nilpotent groups $G$ for which $D(G)$ is perfect. (Theorem \ref{nilpotent-perfect-theorem})
	\item We give several further examples of groups whose difference graphs
are perfect, and others whose difference graphs are imperfect, including a
number of finite simple groups.
\end{itemize} 

\section{Isolated vertices}

It is proved in Aalipour \emph{et~al.} \cite{akbari-cameron} that the power graph and
enhanced power graph of $G$ are equal if and only if every element of $G$
has prime power order. Groups with these properties are known as EPPO groups.
After pioneering work by Higman~\cite{higman:eppo} (who classified the
soluble ones in the 1950s) and Suzuki~\cite{sz2} (who classified the simple
ones in the 1960s), Brandl~\cite{brandl} gave a description of EPPO groups
in 1981. The paper was not well-known, and several authors published similar
results. An accessible account appears in the survey \cite{cm}.

EPPO groups are those groups $G$ for which $D(G)$ has no edges. The next
obvious question is: Which are the isolated vertices which are deleted in the
construction of $D(G)$? To simplify the discussion, in this section and the
next we usually consider the graph $D(G)$ \emph{before} deleting isolated vertices.

\begin{proposition}
Let $G$ be a group with order greater than~$1$. Then the non-identity
element $g$ is an isolated vertex in $D(G)$ if and only if either
$\langle g\rangle$ is a maximal cyclic subgroup of $G$, or every cyclic
subgroup of $G$ containing $g$ has prime-power order.
\end{proposition}

\begin{proof}
If $\langle g\rangle$ is a maximal cyclic subgroup, and $g$ and $h$ are joined
in $D(G)$, then $\langle g,h\rangle$ is cyclic, so $h\in\langle g\rangle$,
 so $g$ and $h$ are joined in the power graph, a contradiction.

If every cyclic subgroup of $G$ containing $g$ has prime power order, and
$g$ and $h$ are joined in $D(G)$, then $\langle g,h\rangle$ has prime power
order, whence $g$ and $h$ are joined in the power graph, again a contradiction.

Conversely suppose that $\langle g\rangle$ is properly contained in a cyclic
subgroup $\langle h\rangle$ of $G$ whose order is not a prime power. Then there
is a prime $p$ such that the $p$-part of $o(h)$ is greater than that of $o(g)$.
We may also assume that, if $o(g)$ is a power of a prime $q$, then $p\ne q$.
Let $k$ be an element in $\langle h\rangle$ whose order is the $p$-part of
$o(h)$. Then $g$ and $k$ are joined in the enhanced power graph but not in
the power graph, so they are joined in $D(G)$.
\end{proof}

Since every finite group has a maximal cyclic subgroup, we see that the set of
isolated vertices which are deleted in the definition is non-empty.

\section{Connection with the Gruenberg--Kegel graph}

The \emph{Gruenberg--Kegel graph} (or GK-graph, for short) of a finite group
$G$ is the graph whose vertices are the prime divisors of $|G|$, with primes
$p$ and $q$ joined by an edge if and only if $G$ contains an element of
order~$pq$.

This was introduced by Gruenberg and Kegel in their study of the integral
group ring of a finite group, and showed that the augmentation ideal is
decomposable if and only if the GK-graph is disconnected. They proved a
structure theorem for graphs with disconnected GK-graph, but did not
publish it; a proof was given by Gruenberg's student Williams~\cite{williams},
and refined by subsequent authors. We note a connection with the preceding
section:

\begin{proposition}
The finite group $G$ is an EPPO group if and only if its GK graph has no edges.
\end{proposition}

The GK graph of a finite group has several connections with various graphs
defined on groups (some of these are listed in \cite{cm}). We can add another
one here.

\begin{theorem}
Let $G$ be a finite group whose order is divisible by the prime $p$. Then
the following are equivalent:
\begin{enumerate}
\item every element of order $p$ is an isolated vertex in $D(G)$;
\item every element of $p$-power order is an isolated vertex in $D(G)$;
\item $p$ is an isolated vertex in the GK graph of $G$;
\item the centralizer of every element of order $p$ is a $p$-group.
\end{enumerate}
\label{akbari-theorem}
\end{theorem}

\begin{proof}
If (a) fails, then some element of order $p$ is contained in a cyclic subgroup
whose order is not a power of $p$, and hence contains an element of order
$q\ne p$; so (c) and (d) also fail. If (a) is true, then no element of order
$p$ can commute with any element of order coprime to $p$, so (b), (c) and (d)
hold also. Moreover, clearly (b) implies (a). 
\end{proof}

Groups satisying (d) of the above theorem have had a lot of attention,
especially in the case $p=2$, where they are called \emph{CIT groups}
(the centralizer of an involution is a $2$-group). The simple CIT groups
were determined by Suzuki~\cite{sz1,sz2}. For odd~$p$, when such groups
are known as C$pp$ groups, Higman and his students have a number of
results, for some of which we refer to \cite{higman:odd,cm}.

\begin{theorem}
Let $G$ be a finite group which is not of prime power order, and suppose that
$D(G)$ is connected. Then the Gruenberg--Kegel graph of $G$ is connected,
apart from possibly some isolated vertices.
\end{theorem}

\begin{proof}
Let $\{g,h\}$ be an edge of $D(G)$. Then $g$ and $h$ are contained in a cyclic
group, and neither of $o(g)$ and $o(h)$ divides the other. So there is a
prime $p$ dividing $o(g)$ to a higher power than $o(h)$, and a prime $q$
dividing $o(h)$ to a higher power than $o(g)$. Put $g'=g^{o(g)/p}$ and
$h'=h^{o(h)/q}$. Then $g'$ has order $p$, $h'$ has order $q$, and
$\{p,q\}$ is an edge in the GK graph of $G$.

Now let $(g,h,k)$ be a path of length~$2$ in $D(G)$. Then there are
elements $g'$ and $h'$ of orders $p$ and $q$ as in the above paragraph,
and elements $h''$ and $k''$ of prime orders $r$ and $s$ so that
$\{h'',k''\}$ is an edge of $D(G)$. If $r=q$ then we can assume that
$h'=h''$ and we have a path of length $2$; otherwise, $\{h',h''\}$ is an
edge of $D(G)$, since $h',h''\in\langle h\rangle$. Thus we can replace
the path of length $2$ by a walk of length $2$ or $3$ all of whose vertices
have prime order; their orders are the vertices of a walk in the GK graph.

Now suppose that $D(G)$ is connected. Choose two primes $p$ and $q$ which
divide $|G|$, and take elements $x$ and $y$ of orders $p$ and $q$
respectively. We may assume that $x$ and $y$ are not isolated in $D(G)$.
By hypothesis, there is a path from $x$ to $y$, which by
the previous construction gives us a walk from $p$ to $q$ in the GK graph.
So the GK graph is connected.
\end{proof}

We note that the converse of this result is false. For example, consider the
group $G=S_3\times S_3$. The GK graph has two vertices, the primes $2$ and $3$,
joined by an edge. The vertices of $D(G)$ are the non-identity elements of the
two direct factors, and the graph consists of a complete bipartite graph on
the elements of order $2$ in the first factor and those of order~$3$ in the
second, and another complete bipartite graph where the roles of the two
factors are reversed.

\section{Preliminaries} 

Before starting the main results, we recall a few results and prove some lemmas that will be crucial in the forthcoming sections. For any element $a$ in a group $G$, $o(a)$ denotes the order of the element $a$ in the group $G$.

We begin with some observations about cyclic groups.

Suppose that $G=\mathbb{Z}_n$, the cyclic group of order~$n$. Then the enhanced power
graph of $G$ is complete, and so $D(G)$ is the complement of the power graph.
We can use this to read off some parameters of $D(G)$, using results from
\cite[Section 8.2]{kscc}.
\begin{itemize}
\item The independence number and clique cover number of $D(G)$ are equal
to the clique number and chromatic number of $\mathsf{Pow}(G)$. These numbers
are equal, since $\mathsf{Pow}(G)$ is perfect, and the common value is given
by $f(n)$, where $f$ is defined by the recursion
\[f(n)=\begin{cases}
1 & \hbox{ if $n=1$},\\
\phi(n)+f(n/p) & \hbox{if $n>1$},
\end{cases}\]
where $\phi$ is Euler's totient and $p$ is the smallest prime divisor of $n$.
It satisfies $\phi(n)\le f(n)\le c\phi(n)$, where 
\[c=\sum_{n\ge0}\prod_{i=1}^n\frac{1}{p_i-1},\]
where $p_1,p_2,\ldots$ are the primes in order; so $c=2.6481017597\dots$.
(We note that these values include the isolated vertices; the number of these
should be subtracted to get the values in the graph $D(G)$ as we have defined
it.)
\item The clique number and chromatic number of $D(G)$ are equal to the
independence number and clique cover number of $\mathsf{Pow}(G)$, which are
equal, and equal to the size of the largest antichain of divisors of $n$,
as we discuss later.
\end{itemize}

See \cite{kscc,cp} for proofs and further details.
\bigskip

\begin{proposition}
A finite group $G$ is nilpotent if and only if for all $x,y \in G$ with 
$\gcd(o(x),o(y))=1$ we have $xy=yx$.
\end{proposition}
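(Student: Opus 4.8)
The plan is to prove both directions through the standard structural characterization of finite nilpotent groups, namely that a finite group is nilpotent if and only if it is the internal direct product of its Sylow subgroups. I would treat the two implications separately, placing the bulk of the work in the converse.

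For the forward direction, suppose $G$ is nilpotent, so $G = P_1 \times \cdots \times P_k$ where $P_i$ is the Sylow $p_i$-subgroup for the distinct primes $p_1, \ldots, p_k$ dividing $|G|$. I would write $x = (x_1, \ldots, x_k)$ and $y = (y_1, \ldots, y_k)$ with $x_i, y_i \in P_i$, and use that $\circ(x) = \prod_i \circ(x_i)$ (a product of coprime prime powers), and similarly for $y$. The coprimality $\gcd(\circ(x), \circ(y)) = 1$ then forces, for each index $i$, at least one of $x_i, y_i$ to be trivial; hence $x_i y_i = y_i x_i$ componentwise, and therefore $xy = yx$.

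For the converse, I assume coprime-order elements commute and aim to recover $G = P_1 \times \cdots \times P_k$. The key observation is that for distinct primes $p_i \ne p_j$, every element of $P_i$ commutes with every element of $P_j$ (their orders being coprime), so $[P_i, P_j] = 1$. Because the $P_i$ pairwise centralize one another, the product $H = P_1 P_2 \cdots P_k$ is a subgroup. I would then argue by induction on the partial products: since $P_1 \cdots P_{i-1}$ has order dividing $\prod_{j<i} |P_j|$, which is coprime to $p_i$, its intersection with the $p_i$-group $P_i$ is trivial, so $|P_1 \cdots P_i| = \prod_{j \le i} |P_j|$. Hence $|H| = \prod_i |P_i| = |G|$, giving $G = P_1 \times \cdots \times P_k$ as an internal direct product. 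Each Sylow subgroup is therefore normal, and being a direct product of $p$-groups, $G$ is nilpotent.

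The main obstacle is not a single hard step but the bookkeeping in the converse: verifying that pairwise commuting genuinely makes $P_1 \cdots P_k$ a subgroup and that the order computations together with triviality of the successive intersections force the product to exhaust $G$. I would be careful to justify $\circ(x) = \prod_i \circ(x_i)$ in the forward direction and the relation $[P_i,P_j]=1$ in the converse, since everything rests on these two points; the remaining equivalence between the direct-product description and nilpotency I would cite as well known.
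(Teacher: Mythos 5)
Your proof is correct. The paper states this proposition without any proof, treating it as a standard characterization of finite nilpotent groups; your argument --- the forward direction via the componentwise order computation in the Sylow direct-product decomposition, and the converse by showing the Sylow subgroups pairwise centralize each other and then counting orders to see that their product exhausts $G$ --- is precisely the standard derivation and has no gaps.
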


\begin{proposition}[\cite{bera-hiranya-mukherjee-power}, Lemma 2.5] \label{coprime}
Let $G$ be a finite group and let  $a,b$ be non-identity elements of $G$
such that $ab=ba$ and $\gcd(o(a),o(b))=1$. Then $a\sim b$ in $D(G)$.
\end{proposition}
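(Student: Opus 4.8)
The plan is to unwind the definition of $D(G)$: since $D(G)$ is obtained from $\mathsf{EPow}(G)-\mathsf{Pow}(G)$ by deleting isolated vertices, proving $a\sim b$ in $D(G)$ reduces to showing three things — that $a$ and $b$ are adjacent in $\mathsf{EPow}(G)$, that they are \emph{not} adjacent in $\mathsf{Pow}(G)$, and that (being adjacent to each other) they are not isolated in the difference and hence survive the deletion. Throughout I will assume $a,b$ are nontrivial, so that $\circ(a),\circ(b)\geq 2$; this is harmless because $\gcd(\circ(a),\circ(b))=1$ already forces $\circ(a)\neq\circ(b)$ and hence $a\neq b$, and it rules out the identity, which is dominating in $\mathsf{Pow}(G)$ and therefore isolated in $D(G)$.

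First I would establish adjacency in the enhanced power graph. Because $a$ and $b$ commute and $\gcd(\circ(a),\circ(b))=1$, the subgroups $\langle a\rangle$ and $\langle b\rangle$ intersect trivially (the order of any common element divides $\gcd(\circ(a),\circ(b))=1$), so $\langle a,b\rangle=\langle a\rangle\times\langle b\rangle\cong \mathbb{Z}_{\circ(a)}\times\mathbb{Z}_{\circ(b)}$. By the Chinese Remainder Theorem this product is cyclic of order $\circ(a)\circ(b)$; concretely $ab$ has order $\circ(a)\circ(b)$ and $\langle a,b\rangle=\langle ab\rangle$. Taking $c=ab$ we get $a,b\in\langle c\rangle$, so $a\sim b$ in $\mathsf{EPow}(G)$.

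Next I would rule out adjacency in the power graph. Suppose, toward a contradiction, that $a\sim b$ in $\mathsf{Pow}(G)$, say $a\in\langle b\rangle$ (the other case is symmetric). Then $\circ(a)\mid\circ(b)$, and combined with $\gcd(\circ(a),\circ(b))=1$ this forces $\circ(a)=1$, contradicting $a\neq e$. Hence $a\not\sim b$ in $\mathsf{Pow}(G)$.

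Combining the two steps, the pair $\{a,b\}$ is an edge of $\mathsf{EPow}(G)-\mathsf{Pow}(G)$; in particular neither endpoint is isolated, so both survive in $D(G)$ and $a\sim b$ there. I expect essentially no serious obstacle: the only point needing a little care is the structural fact that commuting elements of coprime order generate a cyclic group (equivalently $\circ(ab)=\circ(a)\circ(b)$), a standard consequence of the Chinese Remainder Theorem, together with the bookkeeping around the trivial element that justifies restricting to nontrivial $a,b$.
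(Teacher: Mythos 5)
Your proof is correct and complete: the paper itself imports this statement from \cite{bera-hiranya-mukherjee-power} without reproducing a proof, and your argument (commuting elements of coprime order generate a cyclic group, while coprimality of orders $\geq 2$ rules out either being a power of the other) is exactly the standard one behind the cited lemma. Your remark that the identity must be excluded is a fair reading of the statement, since $e$ is dominating in $\mathsf{Pow}(G)$ and hence never an endpoint of an edge of $D(G)$.
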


From the above two propositions, we have the following corollary
\begin{corollary} \label{nilpotent-tool}
In a finite nilpotent group $G$, if $\gcd(o(x),o(y))=1$, then $x \sim y$ in $D(G)$.
\end{corollary}

\begin{lemma} \label{subgroup-induced}
Let $H$ be a subgroup of a group $G$. Then $D(H)$ is an induced subgraph of $D(G)$.
\end{lemma}

\begin{proof}
If the statements ``$y$ is a power of $x$'' and ``$\langle x,y\rangle$ is
cyclic'' are true in $G$ then they are true in any subgroup of $G$ containing
$x$ and $y$.
\end{proof} 

\begin{proposition}\label{p-subgroup}
Let $p$ be a prime. Then the set of elements of $p$-power order in $D(G)$
contains no edges.
\end{proposition}

\begin{proof}
Suppose that $\{x,y\}$ is an edge, where both $x$ and $y$ have prime power
order. Then $x$ and $y$ are contained in a cyclic group of prime power order,
so one is a power of the other, a contradiction.
\end{proof}

\begin{lemma} \label{to-prime-power}
Let $G$ be a group and $x\in G$ be a (non-isolated) vertex in $D(G)$.
\begin{enumerate}
\item If $o(x)=p^\alpha$ for some prime $p$, then there exists a prime $q(\neq p)$ and $y \in G$ such that $o(y)=q^\beta$ and $x\sim y$ in $D(G)$.
\item If $o(x)$ is not a prime power, then there exists a prime $p$ and $y \in G$ such that $o(y)=p^\alpha$ and $x\sim y$ in $D(G)$.
\end{enumerate} 
\end{lemma}

\begin{proof}
As $D(G)$ has no isolated vertex, there exists $g \in G$ such that $x \sim g$ in $D(G)$. Thus $\langle x,g \rangle$ is cyclic. Let $\langle x,g\rangle=\langle z\rangle$.
\begin{enumerate}
\item Let $o(x)=p^\alpha$.  If $o(z)=p^{\alpha_{1}}$, then by Proposition~\ref{p-subgroup}, we have $x \not\sim g$, a contradiction. Thus there exists a prime $q(\neq p)$ such that $q|o(z)$. Let $o(z)=q^\beta m$, where $\gcd(m,q)=1$ and set $y=x^m$. Then $o(y)=q^\beta$. Thus $o(x)$ and $o(y)$ are relatively prime, and $\langle x, y \rangle$ is a subgroup of $\langle z \rangle$. Thus $x \sim y$ in $D(G)$.
\item Let $o(x)$ be such that it is not a prime power. If $o(x)=o(z)$, then $\langle x \rangle=\langle z\rangle$, i.e., $z \in \langle x \rangle$. Also, as $g\in \langle z \rangle$, we have $g\in \langle x \rangle$, i.e., $x\not\sim g$ in $D(G)$, a contradiction. Thus $o(x)<o(z)$. Let $p$ be a prime such that $p$ divides $o(z)/o(x)$. Let $o(z)=p^\alpha m$, where $\gcd(m,p)=1$ and $p^\alpha$ does not divide $o(x)$. Also, as $o(x)$ is not a prime power, $o(x)$ does not divide $p^\alpha$. 
		
Set $y=z^m$. Then $o(y)=p^\alpha$ and $\langle x,y \rangle$ is a subgroup of $\langle z \rangle$. Moreover, $o(x)$ and $o(y)$ does not divide each other, i.e., $x$ and $y$ are not powers of each other. Thus $x\sim y$ in $D(G)$. 
\end{enumerate}
\end{proof}

\begin{theorem}\label{universal-theorem}
The class of difference graphs of groups is universal: that is, given any graph $\Gamma$, there exists a finite abelian group $G$ such that $\Gamma$ is an induced subgraph of $D(G)$.
\end{theorem}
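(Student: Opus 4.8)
The plan is to build $G$ as a direct product of cyclic groups whose prime factors are chosen so that coprimality of orders encodes the edges of $\Gamma$, while a shared rank-two prime component encodes the non-edges. Write $V(\Gamma)=\{1,\dots,n\}$ and $E=E(\Gamma)$. I would take pairwise distinct primes: one prime $p_i$ for each vertex $i$, one prime $s_{ij}$ for each non-edge $\{i,j\}\notin E$, and one further prime $t$, and set
\[
G=\prod_{i=1}^n \mathbb{Z}_{p_i}\ \times\ \prod_{\{i,j\}\notin E}\bigl(\mathbb{Z}_{s_{ij}}\times\mathbb{Z}_{s_{ij}}\bigr)\ \times\ \mathbb{Z}_t .
\]
To the vertex $i$ I assign the element $x_i\in G$ whose $\mathbb{Z}_{p_i}$-coordinate is a generator, whose coordinate in the factor $\mathbb{Z}_{s_{ij}}\times\mathbb{Z}_{s_{ij}}$ attached to a non-edge $\{i,j\}$ is $(1,0)$ when $i<j$ and $(0,1)$ when $i>j$, and whose remaining coordinates (including the $\mathbb{Z}_t$-slot) are trivial; finally let $h$ be the generator of $\mathbb{Z}_t$ with all other coordinates trivial. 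The private primes $p_i$ guarantee that the $x_i$ are distinct non-identity elements, and they also supply a nontrivial support for any dominating vertex (for example when $\Gamma=K_n$ there are no $s_{ij}$-factors at all).

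The verification then splits into two adjacency checks, both resting on Corollary \ref{nilpotent-tool} together with the standard criterion that a join in a finite abelian group is cyclic if and only if it is cyclic in each prime component. For an edge $\{i,j\}\in E$, the orders $\circ(x_i)$ and $\circ(x_j)$ are built from disjoint sets of primes — the only prime they could possibly share is $s_{ij}$, but $\{i,j\}$ is an edge, so that factor does not exist — hence $\gcd(\circ(x_i),\circ(x_j))=1$ and Corollary \ref{nilpotent-tool} gives $x_i\sim x_j$ in $D(G)$. For a non-edge $\{i,j\}\notin E$, the projections of $x_i$ and $x_j$ to the factor $\mathbb{Z}_{s_{ij}}\times\mathbb{Z}_{s_{ij}}$ are the independent vectors $(1,0)$ and $(0,1)$, so $\langle x_i,x_j\rangle$ has the non-cyclic group $\mathbb{Z}_{s_{ij}}\times\mathbb{Z}_{s_{ij}}$ as a homomorphic image and is therefore not cyclic; thus $x_i$ and $x_j$ are non-adjacent already in $\mathsf{EPow}(G)$, hence in $D(G)$.

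It remains to ensure that every $x_i$ actually survives in $D(G)$, and this is the one point requiring care, since isolated vertices are deleted from $D(G)$: a vertex that is isolated in $\Gamma$ has no $x$-neighbour, and without a further device such an $x_i$ would be dominated inside a cyclic subgroup and thus erased. This is exactly what the helper $h$ repairs: $\circ(h)=t$ is coprime to every $\circ(x_i)$, so Corollary \ref{nilpotent-tool} yields $x_i\sim h$ for all $i$, making each $x_i$ a genuine (non-isolated) vertex of $D(G)$. Since $h\notin\{x_1,\dots,x_n\}$, it has no effect on the induced subgraph on $\{x_1,\dots,x_n\}$, whose edge set is exactly $E$ by the two checks above; hence $\Gamma$ is an induced subgraph of $D(G)$. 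I expect the main obstacle to be conceptual rather than computational: one must recognise that cyclic groups alone cannot produce the required non-adjacencies (elements of equal order are forced to be twins), so non-edges must be realised through genuinely non-cyclic rank-two $p$-components, while simultaneously protecting the $\Gamma$-isolated vertices from deletion.
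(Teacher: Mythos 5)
Your proof is correct, and it reaches the theorem by a genuinely different route from the paper, even though the two arguments run on the same engine: coprime orders force adjacency in $D(G)$ (your Corollary~\ref{nilpotent-tool} step), while a surjection onto a rank-two elementary abelian $p$-group forces non-adjacency already in $\mathsf{EPow}(G)$. The paper proceeds by induction on $|V(\Gamma)|$, adjoining one new prime $p$ and one factor $\mathbb{Z}_p\times\mathbb{Z}_p=\langle a,b\rangle$ per vertex, sending the new vertex to $(e,b)$ and tagging each old vertex with $a$ or $e'$ according to non-adjacency or adjacency to the new vertex; you instead give a one-shot global construction with a private prime per vertex, a private rank-two prime component per non-edge, and a helper factor $\mathbb{Z}_t$. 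Your version is more prime-hungry (one prime per non-edge rather than per vertex) but buys two real advantages. First, every non-adjacency in your embedding is witnessed by a non-cyclic join, never by a power relation, so you avoid the point the paper's inductive step glosses over, namely checking that a non-adjacency of the form ``$\varphi(i)$ is a power of $\varphi(j)$'' in $D(G)$ cannot turn into an adjacency after the coordinates in $H$ are appended asymmetrically. Second, you explicitly guard against the deletion of isolated vertices of $\Gamma$ via the element $h$ of coprime order $t$; the paper handles this only implicitly (the new vertex $(e,b)$ happens to be adjacent to the old ones of the form $(z,e')$). Both arguments are sound; yours is the more self-contained and the easier one to verify line by line.
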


\begin{proof}
	The proof is by induction on the number of vertices of $\Gamma$. For a graph with a single vertex, it is obvious. Assume that the result holds for all graphs with $n-1$ vertices. Now let $\Gamma$ be a
	graph with vertex set $\lbrace 1,2, \cdots ,n\rbrace$, and $\Gamma'$ be its induced subgraph on the vertices $\{1,2,\ldots,n-1\}$. From induction hypothesis, let $\varphi$ be an isomorphism from $\Gamma'$ to an induced subgraph of $D(G)$, for a finite abelian group $G$. 
	
	Let $p$ be a prime not dividing the order of $G$, and let $H=\langle a, b \rangle$ be an elementary abelian group of order $p^2$. Consider the group $G \times H$ and the map $\tilde{\varphi}: \Gamma \rightarrow G\times H$ given by 
	$$\tilde{\varphi}(i)=\left\lbrace \begin{array}{cc}
	(\varphi(i),a) & \mbox{ if $i<n$ and $(i,n)\notin E(\Gamma)$}\\
	(\varphi(i),e') & \mbox{ if $i<n$ and $(i,n)\in E(\Gamma)$}\\
	(e,b) & \mbox{ if $i=n$}
	\end{array} \right.$$
	where $e,e'$ are the identity elements of the groups $G$ and $H$ respectively. 
	
	Since $p \nmid |G|$, for any $z \in G$ we have $\langle (z,e'), (e,b) \rangle = \langle (z,e') \rangle \times \langle (e,b) \rangle$, which is cyclic and not generated by either element. So the embedding of $\lbrace 1,2, \cdots , n-1 \rbrace$ is still an isomorphism to an induced subgraph. Moreover, $\langle (\varphi(i),e'),(e,b) \rangle$ is cyclic while $\langle (\varphi(i),a), (e,b) \rangle$ 
	is not, so we have the correct edges from $(e,b)$ to the other vertices, and the result is proved. 
\end{proof}

\section{When $G$ has non-trivial center}
In this section, we deal with $D(G)$ when $G$ has a non-trivial center. If $G$ is a finite $p$-group, then its every cyclic subgroup has prime power order and hence by Theorem \ref{akbari-theorem}, $\mathsf{EPow}(G)=\mathsf{Pow}(G)$, i.e., $D(G)$ is the null graph. So, we focus on non $p$-groups. 

\begin{theorem}\label{non-trivial-center}
	Let $G$ be a finite group which is not a $p$-group. If $G$ has non-trivial center, then $D(G)$ is connected and $\diam(D(G))\leq 6$.
\end{theorem}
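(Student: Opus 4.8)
The plan is to exhibit a single vertex $z$ that is close to every other vertex, so that connectedness and the bound on the diameter both follow at once. Since $Z(G)$ is non-trivial, I would fix a prime $p$ dividing $|Z(G)|$ and, by Cauchy's theorem, choose a central element $z$ with $\circ(z)=p$. The point of taking $z$ central of prime order is that $z$ commutes with every element of $G$, so whenever $x$ is a vertex with $\gcd(\circ(x),p)=1$, Proposition \ref{coprime} immediately gives $x\sim z$ in $D(G)$. In particular, since $G$ is not a $p$-group, Cauchy supplies an element $w$ with $\circ(w)=q$ for some prime $q\neq p$, and then $w\sim z$, so $z$ is genuinely a vertex of $D(G)$.

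The core of the argument is the claim that every vertex $x$ lies within distance $3$ of $z$; connectedness and $\diam(D(G))\le 6$ are then immediate. I would split into three exhaustive cases according to $\circ(x)$. If $p\nmid\circ(x)$, then $x\sim z$ as above, so the distance is $1$. If $\circ(x)=p^\alpha$ is a power of $p$, then Lemma \ref{to-prime-power}(1) supplies a vertex $y$ with $\circ(y)=q^\beta$ for some prime $q\neq p$ and $x\sim y$; since $p\nmid\circ(y)$ we get $y\sim z$, placing $x$ within distance $2$ of $z$.

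The remaining, and most delicate, case is when $p\mid\circ(x)$ but $\circ(x)$ is not a prime power. Here one cannot hope to connect $x$ to $z$ directly, and the naive idea of replacing $x$ by its $p'$-part fails, because that element is a power of $x$ and hence non-adjacent to $x$ in $D(G)$. Instead I would invoke Lemma \ref{to-prime-power}(2) to obtain a vertex $y$ of prime power order $r^\gamma$ with $x\sim y$. If $r\neq p$, then $y\sim z$ and $x$ is within distance $2$; if $r=p$, I apply Lemma \ref{to-prime-power}(1) to $y$ to produce a vertex $y'$ of order coprime to $p$ with $y\sim y'$, whence $y'\sim z$ and $x$ is within distance $3$. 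This potential extra hop, needed precisely when the prime-power vertex returned by Lemma \ref{to-prime-power}(2) happens to be a $p$-element, is the main obstacle to a smaller bound and the only place where the distance to $z$ reaches $3$.

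Combining the three cases, every vertex is within distance $3$ of $z$, so for any two vertices $u,v$ we have $d(u,v)\le d(u,z)+d(z,v)\le 6$, giving both the connectedness of $D(G)$ and $\diam(D(G))\le 6$.
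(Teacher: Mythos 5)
Your proof is correct and follows essentially the same route as the paper: fix a central element $z$ of prime order $p$ dividing $|Z(G)|$, use Lemma \ref{to-prime-power} (together with Proposition \ref{coprime}) to place every vertex within distance $3$ of $z$, and conclude $\diam(D(G))\le 6$ by the triangle inequality. Your extra checks (that $z$ is genuinely a non-isolated vertex, and the shortcut when $p\nmid\circ(x)$) are minor refinements of the same argument.
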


\begin{proof}
As $|Z(G)|>1$, there exists a prime $p$ such that $p\mid |Z(G)|$. Thus there exists $z \in Z(G)$ with $o(z)=p$. We show that any vertex in $D(G)$ is joined by a path to $z$. Let $x \in V(D(G))$. Then by Lemma \ref{to-prime-power}, there exists $y \in V(D(G))$ such that $x \sim y$ and $o(y)=q^\alpha$. If $p\neq q$, we have $x \sim y \sim z$. If $p=q$, as $y\in V(D(G))$, again by Lemma \ref{to-prime-power}, there exists a prime $r \neq p$ and an element $y' \in V(D(G))$ such that $y \sim y'$ and $o(y')=r^\beta$. Thus $x\sim y \sim y' \sim z$, i.e., $d(x,z)\leq 3$. Similarly, for another vertex $x'$, $d(x',z)\leq 3$. (Note that $z$ is same in both the cases) Hence for two arbitrary vertices $x,x'$, we have $d(x,x')\leq 6$
\end{proof}

{\remark The bound in the above theorem is tight. Take $G=\mathbb{Z}_7 \rtimes \mathbb{Z}_{12} $ (GAP ID: (84,1)) \cite{GAP4}. It is a non-nilpotent, super-solvable group with $|Z(G)|=2$ and $\diam(D(G))=6$. If $G$ is a finite nilpotent group which is not a $p$-group, then $G$ has a non-trivial center, and as a result, by Theorem \ref{non-trivial-center}, $D(G)$ is connected and $\diam(D(G))\leq 6$. However, for nilpotent groups this bound can be improved to $4$.}

\begin{theorem}\label{nilpotent-connected}
	If $G$ is a finite nilpotent group which is not a $p$-group, then $D(G)$ is connected and $\diam(D(G))\leq 4$. 
\end{theorem}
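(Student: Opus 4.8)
The plan is to route every path through vertices of \emph{prime-power order} rather than through a single central vertex, which is what keeps the diameter down to $4$. First I would record that, since $G$ is nilpotent and not a $p$-group, it is the direct product of its Sylow subgroups and at least two primes divide $|G|$; in particular $D(G)$ is nonempty (any two central elements of coprime prime orders are adjacent by Corollary~\ref{nilpotent-tool}), and connectedness already follows from Theorem~\ref{non-trivial-center} because a finite nilpotent group has nontrivial centre. So the only task is to sharpen the diameter bound from $6$ to $4$. It is worth noting that the obvious strengthening of the proof of Theorem~\ref{non-trivial-center} — trying to place \emph{every} vertex within distance $2$ of one fixed central element of prime order — genuinely fails: in $D(\mathbb{Z}_{12})$ an element of order $6$ is at distance $3$ from the (central) element of order $2$. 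This is why I would switch to prime-power vertices as the relay set.

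The core step is the claim that any two vertices $a,a'$ of $D(G)$ of prime-power order satisfy $d(a,a')\le 2$. If $\circ(a)=p^{\alpha}$ and $\circ(a')=r^{\gamma}$ with $p\neq r$, then $\gcd(\circ(a),\circ(a'))=1$, so $a\sim a'$ by Corollary~\ref{nilpotent-tool} and $d(a,a')=1$. If instead $p=r$, the two elements need not be adjacent, and here I would manufacture a bridge: since $a$ is a vertex, Lemma~\ref{to-prime-power}(1) gives an element $c$ with $\circ(c)=q^{\beta}$ for some prime $q\neq p$ and $a\sim c$; as $\gcd(\circ(a'),\circ(c))=1$, Corollary~\ref{nilpotent-tool} gives $a'\sim c$ as well, producing the path $a\sim c\sim a'$ and hence $d(a,a')\le 2$.

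The remaining ingredient is that every vertex $x$ of $D(G)$ is within distance $1$ of a vertex of prime-power order. If $\circ(x)$ is itself a prime power, then $x$ already is such a vertex; otherwise Lemma~\ref{to-prime-power}(2) supplies a neighbour $y$ of $x$ with $\circ(y)$ a prime power. Combining the two steps, for arbitrary vertices $x,x'$ I pick prime-power-order vertices $a,a'$ with $d(x,a)\le 1$ and $d(x',a')\le 1$, apply the core claim to get $d(a,a')\le 2$, and conclude $d(x,x')\le 1+2+1=4$ by the triangle inequality, which also reproves connectedness.

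The main obstacle — and essentially the only place where anything must be done — is the same-prime subcase of the core step, where the two prime-power elements are not directly joined and one is forced to produce the coprime intermediary $c$; Lemma~\ref{to-prime-power}(1) is exactly the tool that delivers it. The nilpotency hypothesis enters only through Corollary~\ref{nilpotent-tool}, guaranteeing that coprime-order elements are always adjacent, which is what collapses three of the four potential edges in the bound to length $1$. I would close by remarking that $4$ is the correct general bound (it cannot be reduced for all nilpotent non-$p$-groups), consistent with the remark preceding the statement.
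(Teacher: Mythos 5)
Your proof is correct and follows essentially the same route as the paper's: reduce each endpoint to a prime-power-order vertex at distance at most $1$ via Lemma~\ref{to-prime-power}, then join those two vertices by a path of length at most $2$ using the coprime-order adjacency of Corollary~\ref{nilpotent-tool}. The only cosmetic difference is in the equal-prime case, where the paper takes the bridge vertex to be a central element of a different prime order (using that $|Z(G)|$ has at least two prime divisors), while you re-apply Lemma~\ref{to-prime-power}(1) to obtain a coprime prime-power neighbour; both yield the same bound.
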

\begin{proof}
	Let $\pi(G)$ denote the set of prime divisors of $|G|$. As $G$ is a not a $p$-group, $|\pi(G)|\geq 2$. Again, as $G$ is a finite nilpotent group, $G$ is the direct product of its Sylow subgroups, say $P_1,P_2,\ldots,P_k$, where $k\geq 2$. Also, $Z(G)=Z(P_1)\times Z(P_2)\times \cdots \times Z(P_k)$, i.e., $|Z(G)|$ has atleast two distinct prime factors.

	 Let $x,x' \in V(D(G))$. Then by Lemma \ref{to-prime-power}, there exists $y,y'\in V(D(G))$ such that $x\sim y, x'\sim y'$ and $o(y)=p^\alpha$, $o(y')=q^\beta$ for primes $p$ and $q$. If $p\neq q$, then by Corollary \ref{nilpotent-tool}, $y\sim y'$ and $d(x,x')\leq 3$. If $p=q$, as $|Z(G)|$ has atleast two distinct prime factors, there exists an element $z \in Z(G)$ of prime order $r(\neq p)$ such that $y \sim z \sim y'$. In this case, we have $d(x,x')\leq 4$.
\end{proof}

\begin{remark}
The bound in the above theorem is tight: Take $G=\mathbb{Z}_4\times \mathbb{Z}_4\times \mathbb{Z}_6$. Then $\diam(D(G))=4$ and $(0,2,4),(2,2,4)$ are two antipodal vertices in $D(G)$. 
\end{remark}

\begin{theorem}\label{Dn-connected-theorem}
Let $D_n$ be the dihedral group of order $2n$, then 
\begin{itemize}
	\item $D(D_n)$ is an empty graph, if $n=p^r$ for some prime $p$.
	\item $D(D_n)$ is a connected graph, if $n$ is not a prime power. 
\end{itemize}
\end{theorem}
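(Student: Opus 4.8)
The plan is to reduce everything to the cyclic group $\mathbb{Z}_n$ by analysing the cyclic subgroups of $D_n$. Write $D_n=\langle r,s\mid r^n=s^2=1,\ srs=r^{-1}\rangle$, so the elements split into the $n$ rotations $r^0,\ldots,r^{n-1}$, forming the cyclic subgroup $\langle r\rangle\cong\mathbb{Z}_n$, and the $n$ reflections $sr^i$, each of order $2$. First I would classify the cyclic subgroups: every cyclic subgroup is either contained in $\langle r\rangle$ or is one of the order-$2$ subgroups $\langle sr^i\rangle$. Indeed, if a cyclic subgroup $\langle c\rangle$ contains a reflection $t$, then $t$ is a power of $c$; a rotation generator $c$ would give $\langle c\rangle\leq\langle r\rangle$, which contains no reflection, a contradiction, so $c$ must itself be a reflection and $|\langle c\rangle|=2$. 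Since two vertices are adjacent in $\mathsf{EPow}(D_n)$ precisely when they lie in a common cyclic subgroup, this shows that a reflection is $\mathsf{EPow}$-adjacent only to the identity (and to itself).

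Next I would identify the vertices that survive in the difference graph. The identity $e$ lies in $\langle x\rangle$ for every $x$, hence is adjacent to all vertices in $\mathsf{Pow}(D_n)$ and has no neighbour in $\mathsf{EPow}(D_n)-\mathsf{Pow}(D_n)$; it is isolated and removed. By the previous paragraph each reflection $sr^i$ has its only $\mathsf{EPow}$-neighbour at $e$, and the edge $\{sr^i,e\}$ already lies in $\mathsf{Pow}(D_n)$; so every reflection is also isolated in the difference and removed. Therefore $V(D(D_n))$ consists only of non-identity rotations, and as all rotations lie in $\langle r\rangle$, the $\mathsf{Pow}$- and $\mathsf{EPow}$-adjacencies among them coincide with those computed inside $\langle r\rangle\cong\mathbb{Z}_n$. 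Combining this with Lemma \ref{subgroup-induced} applied to the subgroup $\langle r\rangle$, I would conclude that $D(D_n)=D(\mathbb{Z}_n)$ as graphs.

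With this reduction in hand, the two cases are inherited from earlier results. If $n=p^r$, then every cyclic subgroup of $D_n$ has prime power order (a power of $p$ for rotations, order $2$ for reflections), so Theorem \ref{akbari-theorem} gives $\mathsf{EPow}(D_n)=\mathsf{Pow}(D_n)$ and $D(D_n)$ is empty. If $n$ is not a prime power, then $\mathbb{Z}_n$ is a finite nilpotent (indeed abelian) group that is not a $p$-group, so Theorem \ref{nilpotent-connected} shows $D(\mathbb{Z}_n)$ is connected with $\diam(D(\mathbb{Z}_n))\leq 4$; by the identification $D(D_n)=D(\mathbb{Z}_n)$, the graph $D(D_n)$ is connected.

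I expect the main obstacle to be the structural step isolating the reflections: one must argue carefully that no cyclic subgroup of $D_n$ of order greater than $2$ contains a reflection, so that reflections can never acquire an edge in the difference graph and are stripped away along with the identity. Once that is secured, the identification $D(D_n)=D(\mathbb{Z}_n)$ is routine, and the stated dichotomy follows directly from Theorems \ref{akbari-theorem} and \ref{nilpotent-connected}.
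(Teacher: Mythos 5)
Your proposal is correct and follows essentially the same route as the paper: both arguments show that the reflections (and identity) are isolated in the difference graph because the only cyclic subgroup containing a reflection has order $2$, reduce the remaining vertices to the rotation subgroup $\langle r\rangle\cong\mathbb{Z}_n$, and then invoke Theorem \ref{akbari-theorem} for the prime-power case and Theorem \ref{nilpotent-connected} for the other case. Your write-up merely spells out the classification of cyclic subgroups in more detail than the paper does.
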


\begin{proof}
Let $D_n=\langle a,b: a^n=b^2=1; bab=a^{-1} \rangle$. As elements of the form $a^ib$ in $D_n$ are of order $2$ and only cyclic subgroup containing $a^ib$ is $\{e,a^ib\}$, $a^ib$ are isolated vertices in $\mathsf{EPow}(D_n)$ and hence they are also isolated vertices in $D(G)$. Thus the remaining vertices in $D(D_n)$ are the elements of the cyclic group  of order $n$ generated by $a$. Thus, by Theorem \ref{nilpotent-connected}, if $n$ is not a prime power, then $D(D_n)$ is connected. On the other hand, if $n$ is a prime power, then by using Theorem \ref{akbari-theorem}, we have $D(D_n)$ to be the empty graph.
\end{proof}

\section{When $G$ is a symmetric group}
In the earlier section, we proved that groups with trivial center has connected difference graphs. In this section, we start with an important family of groups with trivial center, namely $S_n$, the symmetric group on $n$ symbols. The main result of this section is: 
{\theorem \label{Sn-connected-theorem} $D(S_n)$ is connected if and only if $n \geq 8$.}\\

We first give a brief outline of the theorem. We first show that for any vertex $u$ in $D(S_n)$, we get a path joining $u$ to a suitable transposition $\tau_u$, and then show that any two transpositions share a common neighbour in $D(S_n)$. Before proving these results formally, we introduce few notations and prove some lemmas which will be crucial in the proof of the above theorem. Let $\sigma \in S_n$. Define $\{\sigma\}=\{i: \sigma(i)\neq i \}$ and $[\sigma]=|\{\sigma\}|$, i.e., $[\sigma]$ is the number of integers in $\{1,2,\ldots,n\}$ which are not fixed by $\sigma$.


{\lemma \label{2-fixed-points} Let $\sigma \in S_n$ such that there exists $a_1 \in \{1,2,\ldots,n\}$ and $k\in \mathbb{N}$ with $\sigma(a_1)\neq a_1$ and $\sigma^k(a_1)=a_1$. Then there exists $a_2(\neq a_1)$ such that $\sigma^k(a_2)=a_2$.}
\begin{proof}
	Let $\sigma=\sigma_1\cdot \sigma_2\cdots \sigma_l$ be the disjoint cycle decomposition of $\sigma$. Thus $\sigma^k=\sigma^k_1\cdot \sigma^k_2\cdots \sigma^k_l$. Also as $\sigma(a_1)\neq a_1$ and $\sigma^k(a_1)=a_1$, there exists $i$ such that $\sigma_i(a_1)\neq a_1$ and and $\sigma^k_i(a_1)=a_1$.
	
	Let $\sigma_i=(a_1 a_2 \ldots a_t)$ where $t\geq 2$. If $k<t$, then $\sigma^k_i(a_1)=a_{1+k}\neq a_1$, a contradiction. Thus $k\geq t$. By division algorithm, let $k=tq+r$, where $0\leq r <t$. Then $\sigma^k_i=(\sigma^t_i)^q\cdot \sigma^r_i=\sigma^r_i$. If $0<r<t$, then arguing as above $\sigma^k_i(a_1)=\sigma^r_i(a_1)=a_{1+r}\neq a_1$, a contradiction. Thus $r=0$, i.e., $k=tq$ and $\sigma^k_i(a_i)=a_i$, for $i=1,2,\ldots,t$. In particular, $\sigma^k_i(a_2)=a_2$. 
	
	As $\sigma_1, \sigma_2,\ldots \sigma_l$ are disjoint cycles, $\sigma_j(a_2)=a_2$ for $j\neq i$. Thus $\sigma^k(a_2)=a_2$.
\end{proof}

{\lemma \label{prime-power-to-transposition} Let $\sigma \in S_n$ be a vertex in $D(S_n)$, where $n\geq 8$. If $o(\sigma)=p^\alpha$ for some odd prime $p$, then there exists a path joining $\sigma$ and a transposition $(a_1a_2)$ in $D(S_n)$.}
\begin{proof}
{\bf Case 1: ($\sigma$ is a $p^\alpha$-cycle)}	If $[\sigma]\leq n-2$, then there exists $a_1,a_2\in \{1,2,\ldots,n\}$ which are fixed by $\sigma$. Then as $o(\sigma)$ is odd, by Proposition \ref{coprime}, $\sigma\sim (a_1a_2)$ in $D(S_n)$. If $[\sigma]= n$ or $n-1$, then we make the following claim:\\
{\it Claim 1:} If $[\sigma]= n$ or $n-1$, then $\sigma$ does not belong to $D(S_n)$, i.e, $\sigma$ is an isolated vertex of $\mathsf{EPow}(S_n)-\mathsf{Pow}(S_n)$.\\
{\it Proof of Claim 1:}
If possible, let $\sigma$ has a neighbour $\tau_1$ in $D(S_n)$, then $\langle\sigma,\tau_1\rangle$ is a cyclic subgroup, say $\langle x \rangle$, of $S_n$. Thus $\sigma \in \langle x \rangle$. First we consider the case $[\sigma]=n$. So $[x]=[\sigma]=n$. Note that, as $\sigma$ is a $p^\alpha$-cycle, $x$ is also a $p^\alpha$-cycle and $\langle x\rangle$ is a cyclic $p$-subgroup of $S_n$. Hence by Proposition~\ref{p-subgroup}, $\sigma \not\sim \tau_1$ in $D(S_n)$, a contradiction. Let us now consider the case $[\sigma]=n-1$. Thus $[x]=n$ or $n-1$. If $[x]=n$, then $x$ has no fixed points and $\sigma$ being a power of $x$ has $1$ fixed point. So, by Lemma \ref{2-fixed-points}, $\sigma$ has at least two fixed points, i.e., $[\sigma]\leq n-2$, a contradiction. If $[x]=n-1$, then $[\sigma]=[x]=n-1$ and hence $x$ is also a $p^\alpha$-cycle and $\langle x\rangle$ is a cyclic $p$-subgroup of $S_n$. Hence by Proposition~\ref{p-subgroup}, $\sigma \not\sim \tau_1$ in $D(S_n)$, a contradiction. Thus Claim 1 holds.

{\bf Case 2: ($\sigma$ is product of more than one disjoint cycles)} Let $\sigma=\sigma_1\cdot \sigma_2\cdots\sigma_l$. If $[\sigma]\leq n-2$, then there exists $a_1,a_2\in \{1,2,\ldots,n\}$ which are fixed by $\sigma$. Then as $o(\sigma)$ is odd, by Proposition \ref{coprime}, $\sigma\sim (a_1a_2)$ in $D(S_n)$. If $[\sigma]= n$ or $n-1$, arguing as in Claim 1, we can show that $[\sigma]=[x]$ where $\sigma\sim \tau_1$ in $D(S_n)$ and $\langle\sigma,\tau_1\rangle=\langle x\rangle$. Let $x=\rho_1\cdot \rho_2\cdots \rho_k$ and $o(\sigma_i)=p^{\alpha_i}$ for $i=1,2,\ldots,l$.\\
{\it Claim 2:} $l\geq k$.\\
{\it Proof of Claim 2:} As $\sigma$ is a power of $x$, say $x^\beta$, then $\sigma_1\cdot \sigma_2\cdots\sigma_l=\rho^\beta_1\cdot \rho^\beta_2\cdots \rho^\beta_k$. Two possibilities may happen: (i) $\rho^\beta_i=\sigma_j$ for all $i$, (ii)  Some of the $\rho^\beta_i$ may split into more than one cycle, and/or (iii) some of the $\rho^\beta_i$ may vanish, i.e., it may be identity. However, if the third possibility occurs, then $\sigma$ will be having more fixed points than $x$. But, as $[\sigma]=[x]$, this can not happen. Thus either (i) or (ii) occurs. For (i), $l=k$ and for (ii) $l>k$. Hence Claim 2 follows.  

If $l=k$ holds, then $\sigma_1\cdot \sigma_2\cdots\sigma_l=\rho^\beta_1\cdot \rho^\beta_2\cdots \rho^\beta_l$, i.e., $\sigma_i=\rho^\beta_{\pi(i)}$ for all $i$ and for some permutation $\pi \in S_l$. Thus $o(\sigma_i)\leq o(\rho_{\pi(i)})$ for all $i$. As $\sigma_i$'s are $\rho_j$'s are cycles, we have $[\sigma_i]\leq [\rho_{\pi(i)}]$ for all $i$. If $[\sigma_i]< [\rho_{\pi(i)}]$ for some $i$, then $[\sigma]=[\sigma_1]+\cdots+[\sigma_i]+\cdots [\sigma_l]< [\rho_1]+\cdots+ [\rho_l]=[x]$, a contradiction, as $[\sigma]=[x]$. Thus $o(\sigma_i)= o(\rho_{\pi(i)})$ for all $i$ and hence $o(\sigma)=o(x)$. Also, we have $\sigma \in \langle x \rangle$. Thus, it follows that $x \in \langle \sigma \rangle$. Again, as $\tau_1 \in \langle x\rangle$, we have $\tau_1\in \langle \sigma\rangle$. But this implies $\sigma \not\sim \tau_1$ in $D(S_n)$, a contradiction.

Thus $l>k$. We recall that $\sigma=\sigma_1\cdot \sigma_2\cdots\sigma_l=\rho^\beta_1\cdot \rho^\beta_2\cdots \rho^\beta_k=x^\beta$ and $o(\sigma_i)=p^{\alpha_i}$ for $i=1,2,\ldots,l$. If the $\alpha_i$'s are distinct for all $i$, i.e., if each $\sigma_i$ are cycles of distinct length, then no two $\sigma_i,\sigma_j$ with $i\neq j$ is obtained by raising the same $\rho_t$ to the power $\beta$, because cycles raised to some power always yields either a single cycle or more than one cycles of same length. Thus, without of loss of generality, we can assume that $\sigma_i$ is formed by taking the $\beta$-th power of $\rho_i$ for all $i=1,2,\ldots,l$. But this implies $l\leq k$, a contradiction. So, all the $\alpha_i$'s are not distinct. Without loss of generality, let $o(\sigma_1)=o(\sigma_2)=p^{\alpha_{1}}$. Now, two cases may occur:

{\bf Case 2A: ($l\geq 3$)} Let $\sigma=(a_1a_2\cdots a_{p^{\alpha_1}})(b_1b_2\cdots b_{p^{\alpha_1}})\sigma_3\cdots \sigma_l$. Now, as $p$ is odd, there exist $p^{\alpha_i}$-cycles $\sigma'_i$ such that $\sigma_i=(\sigma'_i)^2$ for $i=3,\ldots,l$. Let $$y=(a_1b_1a_2b_2\cdots a_{p^{\alpha_1}}b_{p^{\alpha_1}})\sigma'_3\cdots \sigma'_l.$$ Clearly $y^2=\sigma$. Also note that $y$ is the product of a $2p^{\alpha_{1}}$-cycle and $(l-2)$ many $p^{\alpha_i}$-cycles, and all the cycles are disjoint. As $o(\sigma)=p^\alpha$, therefore $\alpha\geq \alpha_i$ for all $i$. Thus $(\sigma'_i)^{p^\alpha}$ is identity for $i=3,\ldots,l$. Also, as $\gcd(2p^{\alpha_{1}},p^\alpha)=p^{\alpha_{1}}$, $y^{p^{\alpha}}$ is a product of $p^{\alpha_{1}}$ disjoint $2$-cycles and hence $o(y^{p^{\alpha}})=2$. Thus $\sigma$ and $y^{p^{\alpha}}$ are powers of $y$ and their orders are coprime. Hence $\sigma \sim y^{p^{\alpha}}$ in $D(S_n)$. Again, as $\{y^{p^{\alpha}}\}=\{y\}=\{\sigma_1\}+\{\sigma_2\}$ and $\{\sigma_i\}\cap \{y^{p^{\alpha}} \}=\emptyset$ for $i=3,\ldots,l$, there exists $a,b,c \in \{1,2,\ldots,n\}\setminus \{y^{p^{\alpha}}\}$. As $\gcd(o(abc),o(y^{p^{\alpha}}))=\gcd(3,2)=1$ and $(abc)$ commutes with $y^{p^{\alpha}}$, by Proposition \ref{coprime}, we have $y^{p^{\alpha}}\sim (abc)$ in $D(S_n)$. Hence, we get the following path $$\sigma \sim y^{p^{\alpha}}\sim (abc)\sim (a_1a_2)$$

{\bf Case 2B: ($l=2$)} In this case, we have $\sigma=\sigma_1\cdot \sigma_2=(a_1a_2\cdots a_{p^{\alpha}})(b_1b_2\cdots b_{p^{\alpha}})$ and $o(\sigma)=o(\sigma_1)=o(\sigma_2)=p^\alpha$. Recall that $[\sigma]=n$ or $n-1$. Thus either $n=2p^\alpha$ or $n-1=2p^\alpha$. As $n\geq 8$, we have $p^\alpha\geq 4$. Let $y=(a_1b_1a_2b_2\cdots a_{p^{\alpha}}b_{p^{\alpha}})$. Then, as in Case 2A, $y^2=\sigma$ and $y^{p^{\alpha}}$ is the product of $p^\alpha$ disjoint two cycles. Let $\tau=y^{p^{\alpha}}=(c_1c_2)(c_3c_4)(c_5c_6)\tau_4 \cdots \tau_{p^\alpha}$, where $\tau_i$'s are disjoint transpositions. Then $\sigma \sim \tau$ in $D(S_n)$. Let $z=(c_1c_3c_5c_2c_4c_6)\tau_4 \cdots \tau_{p^\alpha}$. Then $z^3=\tau$ and $z^2=(c_1c_5c_4)(c_3c_2c_6)$. Thus $\tau\sim z^2$ in $D(S_n)$. Again, as $n\geq 8$, there exist $a_1,a_2 \in \{1,2,\ldots,n\}\setminus \{z^2\}$. Hence $z^2\sim (a_1a_2)$. Combining we get the path $\sigma\sim \tau\sim z^2\sim (a_1a_2)$.
\end{proof}

Now, we are in a position to prove the main theorem of this section.

\noindent {\bf Proof of Theorem \ref{Sn-connected-theorem}:} Let $\sigma_1,\sigma_2$ be two vertices in $D(S_n)$. Then by Lemma \ref{to-prime-power} and Lemma \ref{prime-power-to-transposition}, there exist transpositions $(a_1b_1)$ and $(a_2b_2)$ such that there exists a path $P_1$ joining $\sigma_1$ and $(a_1b_1)$ and a path $P_2$ joining $\sigma_2$ and $(a_2b_2)$. As $n\geq 8$, therefore there exists a $3$-cycle $(a_3b_3c_3)$ which is disjoint with both $(a_1b_1)$ and $(a_2b_2)$. Thus we have $(a_1b_1)\sim (a_3b_3c_3) \sim (a_2b_2)$, and hence we get a path joining $\sigma_1$ and $\sigma_2$ in $D(S_n)$, and $D(S_n)$ is connected for $n\geq 8$.

On the other hand, it can checked by direct computation that $D(S_n)$ is disconnected for $n=5,6,7$ and $D(S_n)$ is empty for $n\leq 4$.\qed 

\section{Difference graph of Alternating Group}

In the previous section, we concentrated on the difference graph of $S_n$ which have trivial center. In particular, we have seen that for symmetric group $S_n$ with $n \geq 8$, the difference graph $D(S_n)$ is connected. In this section we concentrate on a family of simple groups $A_n$, the alternating group on $n$ symbols. 

For a permutation $\sigma \in S_n$, we write $\sigma$ as $\sigma_1 \cdot \sigma_2 \cdots \sigma_l$ where $\sigma_i$ are cycles of length $>1$ and we say that $\sigma$ is a product of $l$ disjoint nontrivial cycles. We know that the sign of a permutation $\sigma$ can be defined from its decomposition into the product of transpositions as
$sign(\sigma) = (-1)^m$ 
where $m$ is the number of transpositions in the decomposition. 
It can be easily seen that if $\sigma=\sigma_1\cdot \sigma_2 \cdots \sigma_l$ (where $\sigma_i$ are nontrivial cycles), 
the sign of $\sigma$ is $(-1)^{[\sigma]-l}.$ The following lemma is immediate. 

\begin{lemma}
	\label{lem:criterion_sigma_even}
	For a positive integer $n$ and $\sigma=\sigma_1 \cdot \sigma_2 \cdots \sigma_l \in S_n$ where $\sigma_i$ are nontrivial cycles, the permutation $\sigma \in A_n$ if and only if $[\sigma]-l$ is even. 
\end{lemma}

We now prove some other lemmas which will be crucial to prove the main result of this Section.

\begin{lemma}
	\label{lem:sigma_palpha_cycle_An}
	Let $\sigma \in A_n$, where $n\geq 3$ and let $\sigma $ be a $p^\alpha$-cycle for some prime $p\neq 3$. 
	If $[\sigma]= n$ or $n-1$ or $n-2$ , then $\sigma$ does not belong to $D(A_n)$, i.e, $\sigma$ is an isolated vertex of $\mathsf{EPow}(A_n)-\mathsf{Pow}(A_n)$.
\end{lemma}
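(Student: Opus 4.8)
The plan is to adapt the template of Claim~1 inside the proof of Lemma~\ref{prime-power-to-transposition}, but to replace the $p$-group obstruction used there by a parity obstruction that is special to $A_n$. First I would record the elementary bookkeeping: a $p^\alpha$-cycle $\sigma$ moves exactly $p^\alpha$ points, so $[\sigma]=p^\alpha$, and the hypothesis $[\sigma]\in\{n,n-1,n-2\}$ says precisely that $\sigma$ fixes at most two of the symbols $\{1,\dots,n\}$. Since $\sigma\in A_n$ is a single cycle, its sign is $(-1)^{p^\alpha-1}=+1$, so $p^\alpha$ is odd; the oddness of $p^\alpha$ is the feature the argument actually uses (the stronger hypothesis $p\neq 3$ is only invoked elsewhere).

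Next I would argue by contradiction. Suppose $\sigma$ is not isolated, so $\sigma\sim\tau$ in $D(A_n)$ for some $\tau$. Then $\langle\sigma,\tau\rangle$ is cyclic, say $\langle\sigma,\tau\rangle=\langle x\rangle$ with $x\in A_n$, and since $\sigma$ and $\tau$ are not powers of one another we have $\langle\sigma\rangle\subsetneq\langle x\rangle$; in particular $p^\alpha\mid\circ(x)$ and $\circ(x)>p^\alpha$. If $\circ(x)$ were a power of $p$, then $\langle x\rangle$ would be a cyclic $p$-subgroup containing both $\sigma$ and $\tau$, and Corollary~\ref{p-subgroup} would give $\sigma\not\sim\tau$ in $D(A_n)$, a contradiction. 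Hence some prime $q\neq p$ divides $\circ(x)$, so $x$ is not a single $p^\alpha$-cycle.

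The decisive step is to pin down the cycle structure of $x$. Writing $\sigma=x^j$ and using that taking powers never merges the disjoint cycles of $x$, the fact that $\sigma$ is a \emph{single} cycle forces exactly one nontrivial cycle $C_0$ of $x$ to survive in $x^j$ while every other nontrivial cycle of $x$ collapses to fixed points; moreover $C_0$ has length $p^\alpha$, since a cycle of length $m$ raised to a power is a single cycle only when that power is coprime to $m$, and then the length is unchanged. Consequently the supports of all remaining nontrivial cycles of $x$, together with the genuine fixed points of $x$, lie inside the set of points fixed by $\sigma$, which has size $n-p^\alpha\le 2$. As each such cycle has length at least $2$, the permutation $x$ can therefore have at most one further nontrivial cycle beyond $C_0$, and if it exists it must be a single transposition supported on two fixed points of $\sigma$.

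It then remains to eliminate both possibilities. If $x$ has no cycle besides $C_0$, then $x$ is itself a $p^\alpha$-cycle and $\circ(x)=p^\alpha$, contradicting $\circ(x)>p^\alpha$. If instead $x=C_0\cdot\rho$ with $\rho$ a transposition, then by Lemma~\ref{lem:criterion_sigma_even} the sign of $x$ is $(+1)(-1)=-1$, so $x\notin A_n$, contradicting $x\in A_n$. Either way we reach a contradiction, so $\sigma$ has no neighbour and is an isolated vertex of $\mathsf{EPow}(A_n)-\mathsf{Pow}(A_n)$. I expect the only delicate point to be the cycle-structure rigidity of the third paragraph---the claim that a single-cycle power forces a length-$p^\alpha$ survivor and pushes everything else onto the fixed-point set of $\sigma$; once that is secured, the parity argument cleanly closes the $[\sigma]=n-2$ case, which was genuinely neighbour-admitting in the symmetric-group setting and is exactly where $A_n$ behaves differently from $S_n$.
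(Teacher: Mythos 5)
Your proof is correct and follows essentially the same route as the paper: the single surviving cycle of $x$ must be a $p^\alpha$-cycle (killed by the cyclic $p$-subgroup obstruction of Corollary \ref{p-subgroup}, or equivalently your order comparison), and the only other possibility, a $p^\alpha$-cycle times a transposition, is killed by the sign obstruction of Lemma \ref{lem:criterion_sigma_even}. The only difference is organizational: you derive the two possible cycle types of $x$ directly from the rigidity of powers of disjoint cycles, while the paper cases on $[x]\in\{n-2,n-1,n\}$ and uses Lemma \ref{2-fixed-points} to exclude $[x]=n-1$; your observation that $p\neq 3$ is not actually needed here is also accurate.
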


\begin{proof}
	If $[\sigma]= n$ or $n-1$, the proof is same as for the case of symmetric group.
	So, let $[\sigma]=n-2.$
	If possible, let $\sigma$ has a neighbour $\tau_1$ in $D(A_n)$, then $\langle\sigma,\tau_1\rangle$ is a cyclic subgroup, say $\langle x \rangle$, of $A_n$. Thus $\sigma \in \langle x \rangle$. By Lemma \ref{2-fixed-points}, we cannot have $[x]=n-1.$ 
	
	Suppose $[x]=n.$ As $x^{\beta}=\sigma$ for some positive integer $\beta$ and $\sigma$ is a $(n-2)$-cycle, we must have that $x$ is the product of a $(n-2)$- cycle and a $2$ cycle. As $\sigma \in A_n$, by Lemma \ref{lem:criterion_sigma_even} we have $n-3$ even and thus $x$ must be an odd permutation. Thus $[x]=n$ is not possible. 
	
	Thus, the only possibility is $[x]=n-2.$ 
	Hence $x$ is also a $p^\alpha$-cycle and $\langle x\rangle$ is a cyclic $p$-subgroup of $A_n$. Hence by Proposition~\ref{p-subgroup}, $\sigma \not\sim \tau_1$ in $D(A_n)$, a contradiction. Thus the proof is complete. 
\end{proof}

\begin{lemma}
	\label{lem:sigma_has_3cycles_samelength_or_2cycles_repeated_lengths}
	Let $\sigma \in A_n$ with $[\sigma]=n$ or $n-1$ and $o(\sigma)=p^r$. Suppose $\sigma$ is a product of $l(>1)$ nontrivial cycles, that is, $\sigma=\sigma_1 \cdot \sigma_2 \cdots \sigma_l.$ If $\sigma$ is not isolated, then at least one of the following two conditions must hold: 
	\begin{enumerate}
		\item $\sigma$ has at least $3$ nontrivial cycles of same length. 
		
		\item There exists $1 \leq i_1< i_2< i_3< i_4\leq l$ such that 
		(i) the length of $\sigma_{i_1}$ and $\sigma_{i_2}$ are same and (ii) the length of $\sigma_{i_3}$ and $\sigma_{i_4}$ are same. 
	\end{enumerate} 
\end{lemma}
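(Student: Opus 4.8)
The plan is to exploit the fact that a non-isolated $\sigma$ must be a proper power of a generator $x$ of a cyclic subgroup, and that passing from $x$ to $\sigma=x^\beta$ forces cycles of $x$ to split into several cycles of equal length; the parity constraint coming from $\sigma,x\in A_n$ will then guarantee that \emph{enough} splitting occurs.

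First I would set up the generator. Since $\sigma$ is not isolated, it has a neighbour $\tau_1$ in $D(A_n)$, so $\langle\sigma,\tau_1\rangle=\langle x\rangle$ is cyclic with $x\in A_n$, and $\sigma=x^\beta$ for some integer $\beta$. Exactly as in the proof of Lemma~\ref{prime-power-to-transposition} (Claim~1), I would show $[\sigma]=[x]$: the support of $\sigma=x^\beta$ is contained in that of $x$, so $[x]\geq[\sigma]$; if $[\sigma]=n$ this already forces $[x]=n$, while if $[\sigma]=n-1$ the only alternative $[x]=n$ is excluded by Lemma~\ref{2-fixed-points}, since a single fixed point of $\sigma=x^\beta$ would force a second one.

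Next I would analyse the cycle structure. Write $x=\rho_1\cdots\rho_k$, where $\rho_i$ is a cycle of length $m_i$. Raising to the $\beta$-th power, each $\rho_i^\beta$ is a product of $d_i:=\gcd(m_i,\beta)$ disjoint cycles, all of the common length $m_i/d_i$. Since $[\sigma]=[x]$, no $\rho_i$ can collapse to the identity (that would turn its support into fixed points of $\sigma$ and strictly decrease $[\sigma]$), so $\sigma$ is the disjoint union of these pieces and hence has $l=\sum_{i=1}^k d_i$ nontrivial cycles. I would then rule out $l=k$: if $l=k$, then every $d_i=1$, so each $\rho_i^\beta$ is again an $m_i$-cycle, whence $\circ(\sigma)=\circ(x)$ and therefore $\langle\sigma\rangle=\langle x\rangle\ni\tau_1$, contradicting $\sigma\sim\tau_1$ in $D(A_n)$. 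Thus $l>k$.

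The decisive step is the parity argument, and this is where membership in $A_n$ (rather than $S_n$) is used. Since both $\sigma$ and $x$ lie in $A_n$ and $[\sigma]=[x]$, Lemma~\ref{lem:criterion_sigma_even} shows that $[\sigma]-l$ and $[x]-k$ are both even, whence $l-k$ is even. Combined with $l>k$ this yields $l-k\geq2$. Now $l-k=\sum_{i=1}^k(d_i-1)$ with every summand $\geq0$. If some $d_i\geq3$, then $\rho_i^\beta$ contributes at least three cycles of equal length to $\sigma$, so condition~(1) holds. Otherwise every $d_i\leq2$, and $\sum_i(d_i-1)=l-k\geq2$ forces at least two indices $i$ with $d_i=2$; each such index contributes a pair of equal-length cycles, and these two pairs are disjoint, producing the four indices $i_1<i_2<i_3<i_4$ of condition~(2). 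The main obstacle is precisely the exclusion of the borderline case in which a single cycle of $x$ splits into exactly two equal cycles, giving only one pair and thus neither condition; the inequality $l>k$ alone cannot rule this out, and what rescues the argument is that $A_n$-membership forces $l-k$ to be \emph{even}, upgrading $l>k$ to $l-k\geq2$. The steps establishing $[\sigma]=[x]$ and $l>k$ are routine given Lemmas~\ref{2-fixed-points} and~\ref{prime-power-to-transposition}, so the parity observation is the one genuinely new ingredient.
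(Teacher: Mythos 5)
Your proof is correct and follows essentially the same route as the paper: both establish $[\sigma]=[x]$ and $l>k$ as in Lemma \ref{prime-power-to-transposition}, and both use the fact that $\sigma,x\in A_n$ together with $[\sigma]=[x]$ forces $l\equiv k\pmod 2$, so $l-k\geq 2$ and the single-repeated-pair case (where $k=l-1$ would make $x$ odd) is excluded. Your version is somewhat more explicit — introducing $d_i=\gcd(m_i,\beta)$ and writing $l-k=\sum_i(d_i-1)$ — but the underlying argument is the paper's.
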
 

\begin{proof}
	let $\sigma$ has a neighbour $\tau_1$ in $D(A_n)$, then $\langle\sigma,\tau_1\rangle$ is a cyclic subgroup, say $\langle x \rangle$, of $A_n$. Thus $\sigma \in \langle x \rangle$. Let $x=\rho_1\cdot \rho_2\cdots \rho_k$ and $o(\sigma_i)=p^{\alpha_i}$ for $i=1,2,\ldots,l$. Proceeding similarly as in the proof of Lemma \ref{prime-power-to-transposition}, we get that 
	all the $\alpha_i$'s are not distinct.
	
	If $\sigma$ does not satisfy any of the $2$ given conditions then there exists $ 1 \leq i_1< i_2 \leq l$ such that (i) $\alpha_{i_1}=\alpha_{i_2}$ , (ii) $\alpha_i \neq \alpha_j$ for any $i,j \in [l]-\{i_1, i_2\}$ and (iii) for any $i \in [l]-\{i_1, i_2\}$, we have  $\alpha_i \neq  \alpha_{i_1}$. That is $\sigma$ has only two nontrivial cycles of same length and all the other nontrivial cycles are of mutually different length. Thus the number of nontrivial cycles of $x$ is exactly $1$ less than the number of nontrivial cycles of $\sigma$ and as $\sigma$ is even and $[\sigma]=[x]$, the permutation $x \notin A_n.$ Thus $\sigma$ must satisfy one of the two aforesaid conditions. This completes the proof. 
\end{proof}

\begin{remark}
	\label{rem:two_2cycles_to_3cycle}
	Let $n \geq 10$. Let $ \pi = (a_1 a_2)(a_3 a_4)$ and $\sigma=(a_5 a_6 a_7)$ where $a_i \neq a_j$ for $i \neq j.$  Then $\pi \sim \sigma$ in $D(A_n).$ Thus, any two $2$ $3$-cycles are connected by a path of length $2$ in $D(A_n).$ Thus, if a permutation $\pi \in D(A_n)$ is connected to a particular $3$-cycle $(a b c)$ by a path of length $t$, then it is connected to any $3$ cycle by a path of length $\leq t+2.$ 
\end{remark}

\begin{lemma}
	\label{lem:6_2cycles_to_3cycle}
	Let $\pi \in A_n$ be a product of $k$ 2-cycles where $n \geq 16$ and $k \geq 6.$ Then there exists $a,b,c,d$ such that there is a path of length $2$ between $\pi$ and $(a b)(c d)$. Thus, there exists $e,f,g$ such that there is a path of length $3$ between $\pi$ and $(e f g)$.
\end{lemma}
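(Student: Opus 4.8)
The plan is to produce the required length-$2$ path $\pi \sim w \sim (ab)(cd)$ by choosing the middle vertex $w$ to be an odd-order permutation that commutes with $\pi$, so that Proposition \ref{coprime} can be applied at both ends. Since $\pi$ is a product of $k$ disjoint transpositions and $\pi \in A_n$, the integer $k$ is even, and the hypothesis $k \geq 6$ lets me single out three of these transpositions: I would write $\pi = (c_1 c_2)(c_3 c_4)(c_5 c_6)\,\rho$, where $\rho$ collects the remaining $k-3$ transpositions and is supported on points disjoint from $\{c_1,\dots,c_6\}$.

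The key step is to take $w = (c_1 c_3 c_5)(c_2 c_4 c_6)$. This element has order $3$, and since odd-order permutations are even, $w \in A_n$. A direct conjugation check shows that $w$ sends $(c_1c_2)\mapsto(c_3c_4)\mapsto(c_5c_6)\mapsto(c_1c_2)$ under conjugation, so it permutes the three transpositions among themselves and hence fixes their product; as $w$ is disjoint from $\rho$, it also commutes with $\rho$, and therefore $w$ commutes with $\pi$. Since $\gcd(\circ(\pi),\circ(w)) = \gcd(2,3) = 1$, Proposition \ref{coprime} gives $\pi \sim w$ in $D(A_n)$.

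For the other half of the path I would exploit the freedom in choosing $(ab)(cd)$. Because $n \geq 16$, the set $\{1,\dots,n\}\setminus\{c_1,\dots,c_6\}$ has at least ten elements, so I can select four distinct points $a,b,c,d$ lying outside the support of $w$. Then $(ab)(cd) \in A_n$ has order $2$ and is disjoint from $w$, hence commutes with it, and Proposition \ref{coprime} again yields $w \sim (ab)(cd)$. This delivers the length-$2$ path $\pi \sim w \sim (ab)(cd)$.

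The length-$3$ path to a $3$-cycle then follows immediately: choosing a $3$-cycle $(efg)$ on three points disjoint from $\{a,b,c,d\}$ (possible since $n \geq 16$), Remark \ref{rem:two_2cycles_to_3cycle} gives $(ab)(cd) \sim (efg)$, so $\pi \sim w \sim (ab)(cd) \sim (efg)$ is the required path. I do not anticipate a genuine obstacle: the only points demanding care are the conjugation computation verifying that $w$ commutes with $\pi$ and the bookkeeping ensuring that enough points remain to place $(ab)(cd)$ and $(efg)$ disjointly, both of which are comfortably guaranteed by $n \geq 16$ and $k \geq 6$ — indeed these hypotheses are more generous than the argument strictly requires.
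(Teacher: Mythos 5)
Your proof is correct, but it takes a genuinely different route from the paper's. The paper singles out \emph{six} of the transpositions of $\pi$, builds $z=(c_1c_3c_5c_2c_4c_6)(c_7c_9c_{11}c_8c_{10}c_{12})\tau_7\cdots\tau_k$ so that $z^3=\pi$, and uses $z^2$ (a product of four $3$-cycles, supported on $12$ points) as the intermediate vertex; the edge $\pi\sim z^2$ comes from $\pi$ and $z^2$ being powers of the common element $z$ with coprime orders, and the edge to $(ab)(cd)$ then needs the four points guaranteed by $n\geq 16$. You instead single out only \emph{three} transpositions and take the ``rotation'' $w=(c_1c_3c_5)(c_2c_4c_6)$, which commutes with $\pi$ because conjugation by $w$ cyclically permutes $(c_1c_2),(c_3c_4),(c_5c_6)$ and is disjoint from the rest; both edges then come directly from Proposition \ref{coprime}. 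Your computations check out: $w\in A_n$, $\gcd(\circ(\pi),\circ(w))=\gcd(2,3)=1$, and $w$ has support of size only $6$, so the bookkeeping is easier. What your approach buys is economy: the intermediate vertex occupies $6$ points rather than $12$, so the hypotheses could be relaxed to roughly $k\geq 3$ and $n\geq 10$ (the latter already forced by Remark \ref{rem:two_2cycles_to_3cycle}), whereas the paper's construction genuinely uses $k\geq 6$ and $n\geq 16$. What the paper's approach buys is uniformity with the surrounding lemmas (e.g.\ Lemma \ref{prime-power-to-transposition} and Lemma \ref{lem:4_3cycles_to_3cycle}), all of which proceed by exhibiting a common root $z$ of the given permutation; but for this particular statement your commuting-element argument is simpler and equally rigorous.
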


\begin{proof}
	Let $\pi=(c_1c_2)(c_3c_4)(c_5c_6)(c_7c_8)(c_9c_{10})(c_{11}c_{12})\tau_7 \cdots \tau_k,$ where $\tau_i$s are disjoint transpositions. Let $$z=(c_1c_3c_5c_2c_4c_6)(c_7c_9c_{11}c_8c_{10}c_{12})\tau_7 \cdots \tau_{p^\alpha}.$$  It is easy to see that as $\pi$ was even, we also have that $z$ is an even permutation. We also have $z^3=\pi$. Further the permutation $z^2$ is a product of $4$ $3$-cycles. Thus $\pi \sim z^2$ in $D(A_n)$. Again, as $n\geq 16$, there exist $a,b,c,d \in \{1,2,\ldots,n\}\setminus \{z^2\}$. Hence $z^2\sim (a b)(cd)$. Combining we get the path $$\pi \sim z^2 \sim (a b)(c d).$$ This proves the Lemma. 
\end{proof}

\begin{lemma}
	\label{lem:4_3cycles_to_3cycle}
	Let $\pi \in A_n$ be a product of $k$ 3-cycles where $n \geq 16$ and $k \geq 4.$ Then there exists $a,b,c$ such that there is a path of length $2$ between $\pi$ and $(a b c)$. 
\end{lemma}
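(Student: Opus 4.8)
The plan is to imitate the construction of Lemma \ref{lem:6_2cycles_to_3cycle}, but in a dual form suited to $3$-cycles. There the element $z$ was built so that $z^3$ recovered $\pi$ while $z^2$ had order coprime to $2$; here I will build a single element $z$ whose \emph{square} recovers $\pi$ (so $\circ(z^2)=3$) and whose \emph{cube} is a nonidentity product of $2$-cycles (so $\circ(z^3)=2$). Since $z^2$ and $z^3$ are powers of a common element with coprime orders, Proposition \ref{coprime} gives $\pi=z^2\sim z^3$ in $D(A_n)$; and a $3$-cycle on three points outside the (small) support of $z^3$ will be adjacent to $z^3$, again by Proposition \ref{coprime}. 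This produces the path $\pi\sim z^3\sim(abc)$ of length $2$.

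Concretely, write $\pi=\sigma_1\sigma_2\sigma_3\sigma_4\,\tau_5\cdots\tau_k$ with all factors $3$-cycles, say $\sigma_1=(a_1a_2a_3)$, $\sigma_2=(a_4a_5a_6)$, $\sigma_3=(a_7a_8a_9)$, $\sigma_4=(a_{10}a_{11}a_{12})$. I pair the first four $3$-cycles into two $6$-cycles
\[
\rho_1=(a_1a_4a_2a_5a_3a_6),\qquad \rho_2=(a_7a_{10}a_8a_{11}a_9a_{12}),
\]
for which a direct check gives $\rho_1^2=\sigma_1\sigma_2$, $\rho_2^2=\sigma_3\sigma_4$, while $\rho_1^3$ and $\rho_2^3$ are each a product of three disjoint transpositions. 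For each remaining $\tau_i=(b_1b_2b_3)$ set $\tau_i'=(b_1b_3b_2)$, so that $(\tau_i')^2=\tau_i$ and $(\tau_i')^3=e$. Now put $z=\rho_1\rho_2\,\tau_5'\cdots\tau_k'$. Then $z^2=\rho_1^2\rho_2^2(\tau_5')^2\cdots(\tau_k')^2=\pi$, and $z^3=\rho_1^3\rho_2^3$ is a product of exactly six disjoint $2$-cycles, of order $2$ and supported on the $12$ points $a_1,\dots,a_{12}$.

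The step I expect to be the crux, and the reason the hypothesis $k\geq 4$ appears, is the \textbf{parity control} forcing $z\in A_n$. A single $6$-cycle is an odd permutation and its cube is a product of three transpositions (again odd), so a construction using only one $6$-cycle would place $z$ and $z^3$ outside $A_n$, and the edges would not lie in $D(A_n)$. Using an even number of $6$-cycles repairs this: the two $6$-cycles contribute an even permutation and the $\tau_i'$ are $3$-cycles (even), so $z\in A_n$, and $z^3$ (six transpositions) is even as well. This is precisely why at least four of the $3$-cycles of $\pi$ are needed. Granting this, Proposition \ref{coprime} yields $\pi=z^2\sim z^3$ in $D(A_n)$ (coprime orders $3$ and $2$). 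Finally, since the support of $z^3$ has only $12$ points and $n\geq 16$, there exist $a,b,c\in\{1,\dots,n\}$ disjoint from it; then $(abc)$ commutes with $z^3$ and has coprime order, so $z^3\sim(abc)$ by Proposition \ref{coprime}. Concatenating gives the desired length-$2$ path $\pi\sim z^3\sim(abc)$, completing the proof.
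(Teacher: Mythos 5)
Your construction is exactly the paper's: pair the first four $3$-cycles into two $6$-cycles, take square roots of the remaining $3$-cycles to form $z$ with $z^2=\pi$, and use $z^3$ (six disjoint transpositions, hence even) as the middle vertex of the path $\pi\sim z^3\sim(abc)$. The argument is correct, and your explicit parity remark explaining why $k\geq 4$ is needed is a point the paper leaves implicit.
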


\begin{proof}
	Let $\pi=(c_1c_2c_3)(c_4c_5c_6)(c_7c_8c_9)(c_{10}c_{11}c_{12})\tau_5 \cdots \tau_{k},$ where $\tau_i$s are cycles of length $3$. Let 
	
	$$z= (c_1 c_4 c_2 c_5 c_3 c_6)(c_7 c_{10} c_8 c_{11} c_9 c_{12}) \tau_5' \cdots \tau_{p^\alpha}',$$
	where for $i \geq 5$, $\tau_i'$s are such that $(\tau_i')^2=\tau_i.$
	Note that $z \in A_n$ and $z^2=\tau$. Moreover, we have $z^3=(c_1 c_5)( c_4  c_3) (c_2 c_6) (c_7 c_{11})(c_{10} c_9)(c_8 c_{12}).$  Thus $\pi \sim z^3$ in $D(A_n)$. Again, as $n\geq 16$, there exist $a, b,c \in \{1,2,\ldots,n\}\setminus \{z^3\}$. Hence $z^3 \sim (a b c)$. Combining we get the path $$\pi \sim z^3 \sim (a b c).$$
	The proof is complete. 
\end{proof}

\begin{lemma} \label{odd-prime-power-to-transposition-alternating} Let $\sigma \in A_n$ be a vertex in $D(A_n)$, where $n\geq 18$. If $o(\sigma)=p^\alpha$ for some prime $p\neq 3$, then there exists $a, b, c \in [n]$ such that there is a path joining $\sigma$ and the $3$-cycle $(a b c)$. Thus, there is a path joining $\sigma$ and any $3$-cycle.  
\end{lemma}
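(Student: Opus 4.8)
The plan is to reduce the general prime-power element $\sigma$ to a product of cycles of uniform small length, then invoke the structural lemmas already proved. First I would split on whether $[\sigma]\leq n-2$ or $[\sigma]\in\{n-1,n\}$. In the first case there are at least two fixed points $a_1,a_2$; since $p\neq 3$ and $\circ(\sigma)=p^\alpha$ is coprime to $3$, I can pick a $3$-cycle $(a_1a_2a_3)$ disjoint from $\sigma$ (using $n\geq 18$ to guarantee a third fixed point $a_3$), and then $\gcd(\circ(\sigma),3)=1$ together with commutativity gives $\sigma\sim(a_1a_2a_3)$ by Proposition \ref{coprime}. This already produces the desired path of length $1$.

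The substantive case is $[\sigma]=n$ or $n-1$. Here I would first dispose of the case where $\sigma$ is a single $p^\alpha$-cycle: by Lemma \ref{lem:sigma_palpha_cycle_An} such a $\sigma$ is isolated when $[\sigma]\in\{n,n-1,n-2\}$, contradicting $\sigma\in D(A_n)$, so this subcase cannot occur. Thus $\sigma$ is a product of $l>1$ nontrivial cycles, and I can apply Lemma \ref{lem:sigma_has_3cycles_samelength_or_2cycles_repeated_lengths}: since $\sigma$ is non-isolated, either (1) it has at least three nontrivial cycles of the same length, or (2) it has two pairs of equal-length cycles. The strategy in each case is the same as in Case 2 of Lemma \ref{prime-power-to-transposition}: merge the equal-length cycles into longer cycles to build an even permutation $y$ with $y$ a power-root of $\sigma$, then take an appropriate power of $y$ whose order is coprime to $\circ(\sigma)$, yielding a neighbour of $\sigma$ that is a product of uniformly short cycles.

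Concretely, in case (1) three equal cycles of length $p^{\alpha_1}$ can be interleaved into a single $3p^{\alpha_1}$-cycle (with the remaining cycles replaced by suitable roots, possible because $p\neq 3$ means $p$ is coprime to the $3$ we introduce); a suitable power then collapses to a product of $p^{\alpha_1}$ disjoint $3$-cycles. Because $p\neq 3$, the orders are coprime and Proposition \ref{coprime} gives $\sigma\sim(\text{product of }3\text{-cycles})$, landing in the hypothesis of Lemma \ref{lem:4_3cycles_to_3cycle} once I check the count of $3$-cycles is at least $4$ (guaranteed by $[\sigma]\geq n-1\geq 17$ and each cycle contributing $3$ points). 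In case (2) the two equal pairs can each be merged into longer cycles and an appropriate power produces a product of $2$-cycles; here I would route through Lemma \ref{lem:6_2cycles_to_3cycle}, again verifying the number of transpositions is at least $6$ using $n\geq 18$. Both lemmas deliver a path to a $3$-cycle, and the final sentence ("any $3$-cycle") follows from Remark \ref{rem:two_2cycles_to_3cycle}.

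The main obstacle I anticipate is the bookkeeping that guarantees the merged permutation $y$ lies in $A_n$ and that the relevant power of $y$ has order exactly coprime to $p$ while being a clean product of uniform short cycles. The parity has to be tracked carefully: interleaving equal-length cycles changes the cycle count, hence the sign via $[\sigma]-l$, so I must check that the constructions in cases (1) and (2) preserve evenness (as was done explicitly in Lemmas \ref{lem:6_2cycles_to_3cycle} and \ref{lem:4_3cycles_to_3cycle}). The secondary difficulty is ensuring the cycle-count thresholds ($\geq 4$ three-cycles or $\geq 6$ transpositions) are met; this is where the bound $n\geq 18$ (rather than the smaller bounds in earlier lemmas) is genuinely needed, and I would confirm that the worst case — a single repeated pair forcing a long residual structure — still clears the threshold. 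Once these parity and counting checks are in place, the result assembles directly from the cited lemmas.
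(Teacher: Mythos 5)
Your overall architecture matches the paper's (reduce to fixed points when possible, otherwise invoke Lemma \ref{lem:sigma_has_3cycles_samelength_or_2cycles_repeated_lengths} and merge equal-length cycles into an even root $y$, then route through Lemmas \ref{lem:6_2cycles_to_3cycle} and \ref{lem:4_3cycles_to_3cycle}), but there is a concrete gap at the very first step: your case split loses the case $[\sigma]=n-2$. If $[\sigma]=n-2$ then $\sigma$ has exactly two fixed points, so there is no third fixed point $a_3$ and no $3$-cycle disjoint from $\sigma$; the bound $n\geq 18$ controls $n$, not $n-[\sigma]$, so it cannot manufacture one. Your first branch therefore only works for $[\sigma]\leq n-3$. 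At the same time your second branch treats only $[\sigma]\in\{n-1,n\}$, and Lemma \ref{lem:sigma_has_3cycles_samelength_or_2cycles_repeated_lengths} is stated only under that hypothesis, so the case $[\sigma]=n-2$ with $\sigma$ a product of more than one nontrivial cycle is simply not covered. This is not a vacuous case (note that Lemma \ref{lem:sigma_palpha_cycle_An} rules out $[\sigma]=n-2$ only for single $p^\alpha$-cycles), and it is precisely the case that costs the paper the most effort: in $S_n$ one would place a transposition on the two fixed points, but a transposition is odd, so the paper instead analyses whether the generator $x$ of $\langle\sigma,\tau_1\rangle$ has $[x]=n-2$ or $[x]=n$ and, in the latter situation, builds $y$ with an extra factor $(d_1d_2)$ on the two fixed points before powering down to a product of $2$-cycles or $3$-cycles.

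A secondary issue: you defer the parity and $p=2$ bookkeeping as something to ``check,'' but in the paper these are not routine verifications — they change the construction. For instance, in the two-pairs case (your case (2)) the paper shows $p=2$ cannot occur at all unless some cycle length repeats at least three times (otherwise every neighbour of $\sigma$ would have $2$-power order, contradicting Lemma \ref{to-prime-power}); in the three-equal-cycles case with $p=2$, $l=4$ and $\sigma_4$ a transposition, the generic interleaving fails and a separate $y$ is needed; and when $[\sigma]=n-2$ and $p=2$ yet another construction producing $2^{\alpha_1}$ disjoint $3$-cycles is used. You would need to supply these arguments, not merely confirm signs, before the proof closes.
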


\begin{proof}
	If $\sigma$ is a $p^\alpha$-cycle, by Lemma \ref{lem:sigma_palpha_cycle_An}	we must have $[\sigma]\leq n-3$. Thus there exists $a, b ,c \in \{1,2,\ldots,n\}$ which are fixed by $\sigma$. Then as $o(\sigma)$ is not divisible by $3$, by Proposition \ref{coprime}, $\sigma\sim (a_1 a_2 a_3)$ in $D(A_n)$.
	
	So, let $\sigma$ has more than one cycles of length divisible by $p$. Here also, if $[\sigma] \leq n-3$, we are done. So, we assume $[\sigma]=n-2 $ or $n-1$ or $n$.

	{\bf Case 1: $[\sigma]=n:$} Throughout Case 1, we will assume $n \geq 16.$ Let $\tau$ be a neighbor of $\sigma$ in $D(A_n)$. Then $\langle\sigma,\tau_1\rangle$ is a cyclic subgroup, say $\langle x \rangle$, of $A_n$. Thus $\sigma \in \langle x \rangle$. Here we must have $[x]=n.$ Let $\sigma=\sigma_1 \sigma_2 \cdots \sigma_l$. By Lemma \ref{lem:sigma_has_3cycles_samelength_or_2cycles_repeated_lengths}, we have either Case 1A or Case 1B. 
	
	{\bf Case 1A:} $\sigma$ has atleast $3$ nontrivial cycles of same length. Without loss of generality, let $o(\sigma_1)=o(\sigma_2)=o(\sigma_3)=p^{\alpha_{1}}$. Now, two cases may occur:
	
	{	\underline {\it Subcase 1Ai: ($l\geq 4$)} }  We first consider $p$ to be an odd prime. Let $$\sigma=(a_1a_2\cdots a_{p^{\alpha_1}})(b_1b_2\cdots b_{p^{\alpha_1}})(c_1c_2\cdots c_{p^{\alpha_1}})\sigma_4 \cdots \sigma_l.$$
	
	Now, as $p$ is not $3$, there exist $p^{\alpha_i}$-cycles $\sigma'_i$ such that $\sigma_i=(\sigma'_i)^3$ for $i=4,\ldots,l$. Consider the permutation $$y=(a_1b_1c_1a_2b_2c_2\cdots a_{p^{\alpha_1}}b_{p^{\alpha_1}}c_{p^{\alpha_1}})\sigma'_4\cdots \sigma'_l.$$ We can see that $y^3=\sigma$ and the sign of $y$ is same as the sign of $\sigma.$ Therefore, $y \in A_n.$  Also note that $y$ is the product of a $3p^{\alpha_{1}}$-cycle and $(l-3)$ many $p^{\alpha_i}$-cycles, and all the cycles are disjoint. As $o(\sigma)=p^\alpha$, therefore $\alpha\geq \alpha_i$ for all $i$. Thus $(\sigma'_i)^{p^\alpha}$ is identity for $i=4,\ldots,l$. It is clear that $y^{p^{\alpha}}$ is a product of $p^{\alpha_{1}}$ disjoint $3$-cycles and hence $o(y^{p^{\alpha}})=3$. Thus $\sigma$ and $y^{p^{\alpha}}$ are powers of $y$ and their orders are coprime and both are even permutations. Hence $\sigma \sim y^{p^{\alpha}}$ in $D(A_n)$. As $l \geq 4,$ there exists $a,b,c,d \in \{1,2,\ldots,n\}\setminus \{y^{p^{\alpha}}\}$. As $\gcd(o((ab)(cd)),o(y^{p^{\alpha}}))=\gcd(2,3)=1$ and $(ab)(cd)$ commutes with $y^{p^{\alpha}}$, by Proposition \ref{coprime}, we have $y^{p^{\alpha}}\sim (ab)(cd)$ in $D(A_n)$. Hence, we get the following path $$\sigma \sim y^{p^{\alpha}}\sim (ab)(cd).$$
	Now by Remark \ref{rem:two_2cycles_to_3cycle}, we have a path from $y^{p^{\alpha}}$ to $(a b c).$ 
	
	We now consider $p=2$ and we can see that the same proof goes through for $p=2$ unless $l=4$ and the length of $\sigma_4=2.$ In this case, we have $$\sigma= (a_1a_2\cdots a_{2^{\alpha}})(b_1b_2\cdots b_{2^{\alpha}})(c_1c_2\cdots c_{2^{\alpha}})(d_1 d_2) . $$  As $n \geq 16$, we have $ \alpha \geq 3.$ Let $y= (a_1b_1c_1a_2b_2c_2\cdots a_{2^{\alpha}}b_{2^{\alpha}}c_{2^{\alpha}})(d_1 d_2)$. Using Lemma \ref{lem:criterion_sigma_even} and the fact that $\sigma$ is even, we get that $y$ must be even and $y^3=\sigma$. Moreover, $\tau=y^{3.2^{\alpha-1}  } $ is a product of $3.2^{\alpha-1} \geq 6$ many $2$-cycles and using Lemma \ref{lem:6_2cycles_to_3cycle} we are done.

	\underline { {\it Subcase 1Aii: ($l=3$)} }  When $l=3$ we cannot have $p=2$ otherwise $\sigma$ will be forced to become an odd permutation. Thus, we have $$\sigma=\sigma_1\cdot \sigma_2 \cdot \sigma_3=(a_1a_2\cdots a_{p^{\alpha}})(b_1b_2\cdots b_{p^{\alpha}})(c_1c_2\cdots c_{p^{\alpha}})$$ and $o(\sigma)=o(\sigma_1)=o(\sigma_2)= o(\sigma_3)=p^\alpha$ with $p \geq 5.$ As $n\geq 16$, we have $p^\alpha\geq 5$. Let us consider the permutation $y=(a_1b_1c_1a_2b_2c_2\cdots a_{p^{\alpha}}b_{p^{\alpha}}c_{p^{\alpha}})$. We first note that $y$ is an even permutation. Moreover, we have  $y^3=\sigma$ and $y^{p^{\alpha}}$ is the product of $p^\alpha$ disjoint three cycles. Thus, we have $\sigma \sim y^{p^{\alpha}}.$ By Lemma \ref{lem:4_3cycles_to_3cycle}, we have a path from $\sigma$ to $(a b c).$

	
	{\bf Case 1B:} If $\sigma $ satisfy both the conditions of Lemma \ref{lem:sigma_has_3cycles_samelength_or_2cycles_repeated_lengths}, then by Case 1A, we are done. So, let $\sigma$ do not have any $3$ nontrivial cycles of same lentgh and there exists $1 \leq i_1< i_2< i_3< i_4\leq l$ such that 
	(i) the length of $\sigma_{i_1}$ and $\sigma_{i_2}$ are same and (ii) the length of $\sigma_{i_3}$ and $\sigma_{i_4}$ are same. W.l.o.g. let $o(\sigma_1)=o(\sigma_2)=p^{\alpha_{1}}$ and 
	$o(\sigma_3)=o(\sigma_4)=p^{\alpha_{2}}$ and let $\alpha_1 < \alpha_2$. If $p=2$ and none of the cycles of $\sigma $ has been repeated more than twice, then each cycle length of $x$ is also a power of $2$ and thus the order of any neighbor of $\sigma $ is a power of $2$, which contradicts Lemma \ref{to-prime-power}. Hence, in Case 1B, we always take $p$ to be an odd prime. 
	Now, we consider the following $2$ cases. 
	
	\underline{{\it Subcase 1Bi: ($l\geq 5$)}}  Let 
	$$\sigma=(a_1a_2\cdots a_{p^{\alpha_1}})(b_1b_2\cdots b_{p^{\alpha_1}})(c_1c_2\cdots c_{p^{\alpha_2}}) (d_1d_2\cdots d_{p^{\alpha_2}})
	\sigma_5 \cdots \sigma_l.$$ Now, as $p$ is not $2$, there exist $p^{\alpha_i}$-cycles $\sigma'_i$ such that $\sigma_i=(\sigma'_i)^2$ for $i=5,\ldots,l$. Consider the permutation $$y=(a_1b_1a_2b_2\cdots a_{p^{\alpha_1}}b_{p^{\alpha_1}}) (c_1d_1c_2d_2\cdots c_{p^{\alpha_2}}d_{p^{\alpha_2}}) \sigma'_5 \cdots \sigma'_l.$$

	We can see that the sign of the permutation $y$ is same as the sign of $\sigma$ and hence $y \in A_n$. We also have $y^2=\sigma$. As $o(\sigma)=p^\alpha$, therefore $\alpha\geq \alpha_i$ for all $i$. Thus $(\sigma'_i)^{p^\alpha}$ is identity for $i=5,\ldots,l$. It is clear that $y^{p^{\alpha}}$ is a product of $p^{\alpha_{1}}+p^{\alpha_{2}}$ disjoint $2$-cycles and hence $o(y^{p^{\alpha}})=2$. Thus $\sigma$ and $y^{p^{\alpha}}$ are powers of $y$ and their orders are coprime and both are even permutations. Hence $\sigma \sim y^{p^{\alpha}}$ in $D(A_n)$. As $l \geq 5,$ there exists $a,b,c \in \{1,2,\ldots,n\}\setminus \{y^{p^{\alpha}}\}$. As $\gcd(o((abc)),o(y^{p^{\alpha}}))=\gcd(2,3)=1$ and $(abc)$ commutes with $y^{p^{\alpha}}$, by Proposition \ref{coprime}, we have $y^{p^{\alpha}}\sim (abc)$ in $D(A_n)$. Hence, we get the following path $$\sigma \sim y^{p^{\alpha}}\sim (abc).$$
	
	\underline{	{\it Subcase 1Bii: ($l=4$)} }  Let 
	$$\sigma=(a_1a_2\cdots a_{p^{\alpha_1}})(b_1b_2\cdots b_{p^{\alpha_1}})(c_1c_2\cdots c_{p^{\alpha_2}}) (d_1d_2\cdots d_{p^{\alpha_2}}).$$
	Take the permutation
	$$y=(a_1b_1a_2b_2\cdots a_{p^{\alpha_1}}b_{p^{\alpha_1}}) (c_1d_1c_2d_2\cdots c_{p^{\alpha_2}}d_{p^{\alpha_2}}) .$$ Then, $y$ is even and $o(y^{p^{\alpha_2}})=2.$
	Then, as in Subcase 1Bi we note that $y^{p^{\alpha_2}}$ is the product of $p^{\alpha_1}+p^{\alpha_2}$ disjoint two cycles and as $n \geq 16$, we must have $p^{\alpha_1}+p^{\alpha_2}\geq 6$. Using Lemma \ref{lem:6_2cycles_to_3cycle}, we are done. 
	
	Thus, we are done for the case $[\sigma]=n.$ 
	
	
	{\bf Case 2: $[\sigma]=n-1:$} In this case, we will assume $n \geq 17.$ Then $n-1 \geq 16$ and now we can proceed in an identical manner as in Case 1. 
	
	{\bf Case 3: $[\sigma]=n-2:$} Throughout this case, we will assume $n \geq 18$ and thus $n-2 \geq 16.$ 
	%
	%
	%
	Let $\sigma$ has a neighbour $\tau_1$ in $D(A_n)$, then $\langle\sigma,\tau_1\rangle$ is a cyclic subgroup, say $\langle x \rangle$, of $A_n$. Thus $\sigma \in \langle x \rangle$. By Lemma \ref{2-fixed-points}, we cannot have $[x]=n-1.$  So, either $[x]=n-2$, or $[x]=n.$ If $[x]=n-2$  we can proceed in exactly same manner as in Case 1 and there is no problem as $n-2 \geq 16.$ So, let $[x]=n.$ Let $[n]-\{\sigma\}=\{d_1, d_2\}.$
	Without loss of generality, let $o(\sigma_1)=o(\sigma_2)=p^{\alpha_{1}}$. Now, two cases may occur:
	
	{\bf Case 3A:} Here we consider $p$ to be an odd prime. 
	
	\underline { {\it Subcase 3Ai: ($l\geq 3$)} } Let $\sigma=(a_1a_2\cdots a_{p^{\alpha_1}})(b_1b_2\cdots b_{p^{\alpha_1}})\sigma_3\cdots \sigma_l$. Now, as $p$ is odd, there exist $p^{\alpha_i}$-cycles $\sigma'_i$ such that $\sigma_i=(\sigma'_i)^2$ for $i=3,\ldots,l$. Let $$y=(a_1b_1a_2b_2\cdots a_{p^{\alpha_1}}b_{p^{\alpha_1}})(d_1 d_2)\sigma'_3\cdots \sigma'_l.$$ We first convince ourself that $y$ is even. Clearly $y^2=\sigma$. Also note that $y$ is the product of one $2p^{\alpha_{1}}$-cycle, one $2$-cycle and $(l-2)$ many $p^{\alpha_i}$-cycles, and all the cycles are disjoint. As $o(\sigma)=p^\alpha$, therefore $\alpha\geq \alpha_i$ for all $i$. Thus $(\sigma'_i)^{p^\alpha}$ is identity for $i=3,\ldots,l$. Also, as $\gcd(2p^{\alpha_{1}},p^\alpha)=p^{\alpha_{1}}$, $y^{p^{\alpha}}$ is a product of $p^{\alpha_{1}}+1$ disjoint $2$-cycles and hence $o(y^{p^{\alpha}})=2$. Thus $\sigma$ and $y^{p^{\alpha}}$ are powers of $y$ and their orders are coprime. Hence $\sigma \sim y^{p^{\alpha}}$ in $D(A_n)$. Again, as $n \geq 18,$ there exists $a,b,c \in \{1,2,\ldots,n\}\setminus \{y^{p^{\alpha}}\}$. As $\gcd(o(abc),o(y^{p^{\alpha}}))=\gcd(3,2)=1$ and $(abc)$ commutes with $y^{p^{\alpha}}$, by Proposition \ref{coprime}, we have $y^{p^{\alpha}}\sim (abc)$ in $D(A_n)$. Hence, we get the following path $$\sigma \sim y^{p^{\alpha}}\sim (abc). $$
	
	\underline{{\it Subcase3Aii: ($l=2$)}}  In this case, we have $\sigma=\sigma_1\cdot \sigma_2=(a_1a_2\cdots a_{p^{\alpha}})(b_1b_2\cdots b_{p^{\alpha}})$ and $o(\sigma)=o(\sigma_1)=o(\sigma_2)=p^\alpha$.  As $n\geq 18$, we have $p^\alpha\geq 6$. Let $y=(a_1b_1a_2b_2\cdots a_{p^{\alpha}}b_{p^{\alpha}})(d_1 d_2)$. Then, as in Case 3A, $y^2=\sigma$ and $y^{p^{\alpha}}$ is the product of $p^\alpha+1$ disjoint two cycles. 
	Let $$\tau=y^{p^{\alpha}}=(c_1 c_2) (c_3 c_4) (c_5 c_6) (c_7 c_8) (c_9 c_{10}) (c_{11} c_{12}) \tau_7 \cdots \tau_{p^{\alpha}+1}$$ where $\tau_i$ are transpositions. Again, by using Lemma \ref{lem:6_2cycles_to_3cycle}, we have a path from $\sigma$ to a $3$-cycle $(a b c).$
	
	{\bf Case 3B:} We now consider $p=2$. We first observe that if none of the cycles of $\sigma$ repeats more than twice,  then each cycle length of $x$ is also a power of $2$ and thus the order of any neighbor of $\sigma $ is a power of $2$, which contradicts Lemma \ref{to-prime-power}. Hence, 
	$\sigma$ has at least $3$ nontrivial cycles of same length.
	Moreover, $\sigma$ must have at least $4$ nontrivial cycles otherwise $\sigma$ will be odd. So, let
	
	$$\sigma=(a_1a_2\cdots a_{2^{\alpha_1}})(b_1b_2\cdots b_{2^{\alpha_1}}) (c_1c_2\cdots c_{2^{\alpha_1}}) \sigma_4 \sigma_5 \cdots \sigma_l.$$ Let us take $$y=(a_1b_1 c_1 a_2b_2c_2\cdots a_{2^{\alpha_1}}b_{2^{\alpha_1}}c_{2^{\alpha_1}}) \sigma_4'\sigma'_5\cdots \sigma'_l$$ where $(\sigma_i')^3= \sigma_i$ for $i \geq 4.$ Then $y^{2^{\alpha} }$ is a product of $2^{\alpha_1}$ disjoint $3$ cycles. Moreover as $\sigma$ is even, we see that $y$ is even and hence $y^{2^{\alpha}}$ is also even. Thus, we have $\sigma \sim y^{2^{\alpha}}$ in $D(A_n).$ If $\alpha_1 \leq 2$, then we have $[n]-\{ y^{2^{\alpha_1}} \} \geq 4$ and thus we have $a,b,c,d$ such that $y^{2^{\alpha_1}} \sim (a b)(c d) .$ If $\alpha_1 \geq 3$ then  $y^{2^{\alpha_1}}$ is a product of atleast $8$ disjoint $3$-cycles and we can again use Lemma \ref{lem:4_3cycles_to_3cycle} to get a path from $\sigma$ to a $3$-cycle $(a b c).$  
	
	This completes the proof. 
\end{proof} 
We are now in a position to prove the main result of this section. 

{\theorem \label{An-connected-theorem} If $n \geq 18$, then $D(A_n)$ is connected.}

\begin{proof} Let $\sigma_1,\sigma_2$ be two vertices in $D(A_n)$. Then by Lemma \ref{to-prime-power}, there exists an element, say $\alpha_1$ of prime power order say $q_1^{\beta_1} (q_1 \neq 3)$ such that $\sigma_1$ is connected by a path to $\alpha_1.$ Similarly, there exists an element, say $\alpha_2$ of prime power order say $q_2^{\beta_2} (q_2 \neq 3)$ such that $\sigma_2$ is connected by a path to $\alpha_2.$
	Using Lemma \ref{odd-prime-power-to-transposition-alternating} we have a path joining $\alpha_1$ to a $3$-cycle $(a b c) $ and a path joining $\alpha_2$ to the same $3$-cycle $(a b c)$.
	The proof is complete. 
\end{proof}

\section{Groups with Trivial Center}
It has been observed that if $G$ is a finite group with non-trivial center, then $D(G)$ is connected. So, the natural question is to enquire about the connectedness of $D(G)$ when $Z(G)$ is trivial. It was shown that $D(S_n)$ is connected if and only if $n\geq 8$. Thus, it is not possible to have a general answer to this question. So, we investigate the connectedness of some special families of groups with trivial center.

{\theorem \label{direct-product-theorem} Let $G_1,G_2, G_3$ be three finite groups such that $G_1\times G_2 \times G_3$ is not a $p$-group, then $D(G_1\times G_2 \times G_3)$ is connected.}
\begin{proof}
	Let $a=(a_1,a_2,a_3), b=(b_1,b_2,b_3)$ be two vertices in $D(G_1\times G_2 \times G_3)$. Thus, by Lemma \ref{to-prime-power}, there exist $a'=(a'_1,a'_2,a'_3), b'=(b'_1,b'_2,b'_3) \in D(G_1\times G_2 \times G_3)$ such that $a\sim a'$ with $o(a')=p^\alpha$ and $b \sim b'$ with $o(b')=q^\beta$ for some primes $p$ and $q$.
	
	{\bf Case 1:} ($p\neq q$) Since $o(a')=p^\alpha$, therefore one of $o(a'_1),o(a'_2),o(a'_3)$ is $p^\alpha$. Without loss of generality, let $o(a'_1)=p^\alpha$. 
	
	\noindent {\it Claim 1:} $(a_1,a_2,a_3)\sim (a'_1,e_2,e_3)$ in $D(G_1\times G_2 \times G_3)$.\\
	{\it Proof of Claim:} As $a \sim a'$ in $D(G_1\times G_2 \times G_3)$, we have $\langle a,a'\rangle=\langle (x_1,x_2,x_3)\rangle$. Then $\langle a_1,a'_1 \rangle \subseteq \langle x_1 \rangle$ and hence $\langle (a_1,a_2,a_3), (a'_1,e_2,e_3) \rangle \subseteq \langle (x_1,a_2,a_3) \rangle $. Since $o((a'_1,a'_2,a'_3))=o((a'_1,e_2,e_3))=p^\alpha$, none of $(a_1,a_2,a_3)$ and $(a'_1,e_2,e_3)$ is a power of the other. Hence $(a_1,a_2,a_3)\sim (a'_1,e_2,e_3)$ in $D(G_1\times G_2 \times G_3)$
	
	Again, one of $o(b'_1),o(b'_2),o(b'_3)$ is $q^\beta$. If $o(b'_2)=q^\beta$ or $o(b'_3)=q^\beta$, say $o(b'_2)=q^\beta$, then by Claim 1, we have $(a_1,a_2,a_3)\sim (a'_1,e_2,e_3)$ and $(e_1,b'_2,e_3) \sim (b_1,b_2,b_3)$ in $D(G_1\times G_2 \times G_3)$. Again, as $o((a'_1,e_2,e_3))=p^\alpha$ and $o((e_1,b'_2,e_3))=q^\beta$ and they commute, we have $(a'_1,e_2,e_3)\sim (e_1,b'_2,e_3)$. Thus, we get a path: $$(a_1,a_2,a_3)\sim (a'_1,e_2,e_3)\sim (e_1,b'_2,e_3) \sim (b_1,b_2,b_3)\mbox{ in }D(G_1\times G_2 \times G_3).$$
	
	If none of $o(b'_2)$ or $o(b'_3)$ is $q^\beta$, then we must have $o(b'_1)=q^\beta$. Then by Claim 1, $(b_1,b_2,b_3) \sim (b'_1,e_2,e_3)$.
	
	{\bf Case 1A:} (There exists a prime $r$ other than $p$ and $q$ such that $r$ divides $|G_2|$ or $|G_3|$) Without loss of generality, let $r$ divides $|G_2|$. Then we get an element $c_2 \in G_2$ with $o(c_2)=r$. Hence we get the following path joining $a$ and $b$:
	$$(a_1,a_2,a_3)\sim (a'_1,e_2,e_3)\sim (e_1,c_2,e_3)\sim (b'_1,e_2,e_3)\sim (b_1,b_2,b_3)$$
	
	{\bf Case 1B:} ($|G_2|$ and $|G_3|$ has no prime factors other than $p$ and $q$) Without loss of generality, let $q$ divides $|G_2|$. Then there exists $c_2 \in G_2$ with $o(c_2)=q$. Thus we get a path $(a_1,a_2,a_3)\sim (a'_1,e_2,e_3)\sim (e_1,c_2,e_3)$. As $(b'_1,e_2,e_3)$ is not isolated (using Claim 1) and $o((b'_1,e_2,e_3))=q^\beta$, by Lemma \ref{to-prime-power}, there exists a prime $r(\neq q)$ and $(d_1,d_2,d_3) \in G_1\times G_2 \times G_3$ such that $\o(d_1,d_2,d_3))=r^\gamma$ and $(b'_1,e_2,e_3)\sim (d_1,d_2,d_3)$ in $D(G_1\times G_2 \times G_3)$.
	
	If $o(d_1)$ or $o(d_3)$ is $r^\gamma$, say $o(d_1)=r^\gamma$, then by repeated use of Claim 1, we get the following path in $D(G_1\times G_2 \times G_3)$:
	$$(a_1,a_2,a_3)\sim (a'_1,e_2,e_3)\sim (e_1,c_2,e_3)\sim (d_1,e_2,e_3)\sim (b'_1,e_2,e_3)\sim (b_1,b_2,b_3).$$
	
	If $o(d_1),o(d_3)\neq r^\gamma$, we must have $o(d_2)=r^\gamma$. If $r\neq p$, then we get the following path in $D(G_1\times G_2 \times G_3)$:
	$$(a_1,a_2,a_3)\sim (a'_1,e_2,e_3)\sim (e_1,d_2,e_3)\sim (b'_1,e_2,e_3)\sim (b_1,b_2,b_3).$$
	
	So, we are left with the case when $r=p$. Then we consider $|G_3|$. As $p$ and $q$ are the only possible distinct prime factors of $|G_3|$, $G_3$ must contain an element $f_3$ of order $p$ or $q$. 
	\begin{itemize}
		\item If $o(f_3)=p$, then we get the following path in $D(G_1\times G_2 \times G_3)$: $$(a_1,a_2,a_3)\sim (a'_1,e_2,e_3)\sim (e_1,c_2,e_3)\sim (e_1,e_2,f_3)\sim (b'_1,e_2,e_3) \sim (b_1,b_2,b_3).$$
		\item If $o(f_3)=q$, then we get the following path in $D(G_1\times G_2 \times G_3)$: $$(a_1,a_2,a_3)\sim (a'_1,e_2,e_3)\sim (e_1,e_2,f_3)\sim (e_1,d_2,e_3)\sim (b'_1,e_2,e_3) \sim (b_1,b_2,b_3).$$
	\end{itemize}
Thus, summing up all the cases, we have shown that $D(G_1\times G_2 \times G_3)$ is connected if $p\neq q$, i.e., {\bf Case 1} is resolved. Now, we turn towards the case when $p=q$.

{\bf Case 2:} ($p=q$) If $p=q$, as $G_1\times G_2 \times G_3$ is not a $p$-group, there exists a prime $r\neq p$, which divides $|G_1\times G_2 \times G_3|$, i.e., $r$ divides one of $|G_1|$, $|G_2|$ and $|G_3|$. Proceeding similarly as in Case 1, it can be shown that $D(G_1\times G_2 \times G_3)$ is connected.
\end{proof}

{\remark In light of Theorem \ref{direct-product-theorem}, if $G_i$'s are groups with trivial centers, then the difference graph of their direct product is connected. So, Theorem \ref{direct-product-theorem} gives us a natural way to construct infinitely many finite groups $G$ with trivial center such that $D(G)$ is connected.}
	
{\remark It is to be noted that Theorem \ref{direct-product-theorem} can be generalized to direct product of $n$ groups where $n\geq 3$ in a similar way. However, we add an word of caution that Theorem \ref{direct-product-theorem} may not be true for direct product of two groups. For example, $D(S_3\times S_3)$ is a disconnected graph of order $10$ with $2$ isomorphic components of order $5$ each.}

In the next theorem, we identify some families of graphs which can be expressed as direct product of two groups $G_1$ and $G_2$ such that $Z(G_1)$, $Z(G_2)$ are trivial but $D(G_1 \times G_2)$ is connected.

{\theorem \label{direct-product-dihedral} $D(D_n \times D_m)$ is disconnected if and only if $n$ and $m$ are powers of same odd prime.}
\begin{proof}
	If any of $n$ or $m$ is even, then $Z(D_n\times D_m)$ is non-trivial and as a result $D(D_n \times D_m)$ is connected. Note that if both $n$ and $m$ are powers of $2$, i.e., $D_n\times D_m$ is a $2$-group, then $D(D_n \times D_m)$ is empty. Thus we deal only with the case when both $m$ and $n$ are odd.
	
	Let $D_n=\langle r_1,s_1: r^n_1=s^2_1=e, s_1r_1s_1=r^{-1}_1 \rangle$ and $D_m=\langle r_2,s_2: r^m_2=s^2_2=e, s_2r_2s_2=r^{-1}_2 \rangle$. The idea of the proof is to show that 
	\begin{itemize}
		\item any vertex $(a,b)\in D(D_n \times D_m)$ is joined by a path to either an element of the form $(r^x_1,e)$ or $(e,r^y_2)$ in $D(D_n \times D_m)$.
		\item There exists a path joining $(r^x_1,e)$ and $(r^y_1,e)$ in $D(D_n \times D_m)$.
		\item There exists a path joining $(r^x_1,e)$ and $(e,r^y_2)$ in $D(D_n \times D_m)$.	
	\end{itemize}

{\it Claim 1:} Any element the form $(r^i_1s_1,r^j_2s_2)$ is not a vertex of $D(D_n \times D_m)$, i.e., it is an isolated vertex in the difference of enhanced power graph and power graph of $D_n \times D_m$.

{\it Proof of Claim 1:}	We show that $(r^i_1s_1,r^j_2s_2)$ is not adjacent to any vertex in enhanced power graph of $D_n \times D_m$. If not, it is adjacent to some vertex in the enhanced power graph, which in turn implies that $(r^i_1s_1,r^j_2s_2)$ belongs to some cyclic group containing $(r^i_1s_1,r^j_2s_2)$ and atleast another element of $D_n \times D_m$. However, the only cyclic group containing $(r^i_1s_1,r^j_2s_2)$ is $\langle(r^i_1s_1,r^j_2s_2)\rangle$ which contains no non-identity element other than $(r^i_1s_1,r^j_2s_2)$.

Thus from Claim 1, it follows that any vertex $(a,b)$ of $D(D_n \times D_m)$ is one of the three forms: $(r^i_1, r^j_2s_2), (r^i_1s_1, r^j_2)$ or $(r^i_1, r^j_2)$.

{\bf Case 1:} Let $(a,b)=(r^i_1, r^j_2s_2) \in D(D_n \times D_m)$. By Lemma \ref{to-prime-power}, $(r^i_1, r^j_2s_2)$ is adjacent to some $(a_1,b_1)$ of order $p^\alpha$ in $D(D_n \times D_m)$. Also, note that $(r^i_1, r^j_2s_2)$ belongs only in cyclic groups of the form $\langle (r^*_1, r^j_2s_2) \rangle$, i.e., $(a_1,b_1)\in \langle (r^*_1, r^j_2s_2) \rangle$. As $o(r^j_2s_2)=2$, if $p$ is odd, then $b_1=e$ and we get $(a,b)\sim (r^*_1, e)$. If $p=2$, then $o((a_1,b_1))=2^\alpha$ and again by Lemma \ref{to-prime-power}, $(a_1,b_1) \sim (a_2,b_2)$ where $o((a_2,b_2))=q^\beta$, $q$ being an odd prime. By similar argument and using the fact that $o(r^j_2s_2)=2$, we get $b_2=e$ and $(a,b)\sim (a_1,b_1)\sim (r^*_1, e)$.

{\bf Case 2:} Let $(a,b)=(r^i_1s_1, r^j_2) \in D(D_n \times D_m)$. We proceed similarly as in Case 1 to get a path joining $(a,b)$ and a vertex of the form $(e,r^*_2)$.

{\bf Case 3:} Let $(a,b)=(r^i_1, r^j_2) \in D(D_n \times D_m)$. By Lemma \ref{to-prime-power}, there exists an element $(a_1,b_1) \in D(D_n \times D_m)$ such that $(a,b)\sim (a_1,b_1)$ and $o((a_1,b_1))=p^\alpha$. As $n$ and $m$ are not powers of same prime, there exists a prime $q(\neq p)$ such that $p|n$ or $p|m$. Let $p|n$. Then there exists an element $r^x_1 \in D_n$ of order $p$ and we get the following path in $D(D_n \times D_m)$: $(a,b)\sim (a_1,b_1)\sim (r^x_1,e)$, where the adjacency of the last two vertices follows from Proposition \ref{coprime}.

Thus combining the above three cases, we have shown that any vertex $(a,b)\in D(D_n \times D_m)$ is joined by a path to either an element of the form $(r^x_1,e)$ or $(e,r^y_2)$ in $D(D_n \times D_m)$.

As $n$ and $m$ are odd, it is clear that two vertices of the form $(r^x_1,e)$ and $(r^y_1,e)$ has a common neighbour $(e,s_2)$. 

Now, we construct a path between two vertices of the form $(r^x_1,e)$ and $(e,r^y_2)$ in $D(D_n \times D_m)$ as follows: $$(r^x_1,e)\sim (e,s_2)\sim (r^z_1,e)\sim (e,r^w_2)\sim (s_1,e)\sim (e,r^y_2),$$ where we choose $w$ and $z$ such that $o(r^z_1)$ and $o(r^w_2)$ are powers of different primes. This can be done as $n$ and $m$ are not powers of same odd prime. Thus, if $n$ and $m$ are not powers of same odd prime, $D(D_n \times D_m)$ is connected.

Now, we consider the case, when $n$ and $m$ are powers of same odd prime, say $n=p^\alpha$ and $m=p^\beta$. We will show that there is no path joining $(r_1,e)$ and $(e,r_2)$ in  $D(D_n \times D_m)$. We start by noting that $(r_1,e)$ and $(e,r_2)$ are indeed vertices of $D(D_n \times D_m)$, as $(r_1,e)\sim (e,s_2)$ and $(e,r_2)\sim (s_1,e)$ in $D(D_n \times D_m)$, i.e., $(r_1,e)$ and $(e,r_2)$ are not isolated vertices of the difference of enhanced power graph and power graph of $D_n \times D_m$.

We observe that $(r_1,e)$ belongs to cyclic subgroups of one of the following two types: $\langle (r^i_1,r^j_2) \rangle$ or $\langle (r^i_1,r^j_2s_2) \rangle$. However, as $\langle (r^i_1,r^j_2) \rangle$ are $p$-groups, the subgraph induced by these subgroups is empty and hence Lemma \ref{subgroup-induced}, elements in these subgroups do not contribute to a path joining $(r_1,e)$ to $(e,r_2)$. Thus, neighbours of $(r_1,e)$ lies in subgroups of the type $\langle (r^i_1,r^j_2s_2) \rangle$. The elements of $\langle (r^i_1,r^j_2s_2) \rangle$ are either of the form $(r^*_1,e)$ or $(r^*_1,r^j_2s_2)$. As $(r_1,e)\not\sim (r^*_1,e)$, thus $(r_1,e)$ is adjacent only to vertices of the form $(r^*_1,r^j_2s_2)$. Similarly, $(r^*_1,r^j_2s_2)$ are only adjacent to vertices of the form $(r^{**}_1,e)$.

Proceeding similarly, $(e,r^*_2)$ and $(r^j_1s_1,r^{**}_2)$ are exclusive neighbours. Thus there does not exist any path joining $(r_1,e)$ and $(e,r_2)$ in  $D(D_n \times D_m)$ and hence $D(D_n \times D_m)$ is disconnected.
\end{proof}
	 
\section{The clique number of $D(G)$}

If $S$ is a set of elements of a group $G$ such that every two elements of $S$
generate a cyclic group, then $S$ generates a cyclic group (see
\cite[Lemma 32]{akbari-cameron}). Hence every maximal clique in
$\mathsf{EPow}(G)$ is
a maximal cyclic subgroup; thus every maximal clique in $D(G)$ is contained in
a maximal cyclic subgroup of $G$. So we first need to find the clique number
of $D(\mathbb{Z}_n)$ for an integer $n$.

\begin{proposition}
The clique number of $D(\mathbb{Z}_n)$ is equal to the maximum size of an
antichain in the lattice of divisors of $n$.
\end{proposition}

\begin{proof}
Two elements of $\mathbb{Z}_n$ which are joined in $D(\mathbb{Z}_n)$ must have
different orders, and neither divides the other; conversely, two elements with
this property are joined in $D(\mathbb{Z}_n)$.
\end{proof}

The maximum size of an antichain in the lattice of divisors of $n$ was found
by de Bruijn \emph{et al.} \cite{debruijn}; this is a generalization of
Sperner's lemma. Define the \emph{degree} of $n$ to be the number of prime
divisors of $n$, counted with multiplicity. Let $m$ be the degree of $n$.
Then an antichain of maximal size consists of all the divisors of $n$ of
degree $m/2$, if $m$ is even; or either all divisors of degree $(m-1)/2$ or
all divisors of degree $(m+1)/2$, if $m$ is odd.

For example, a clique of maximal size in $C_{360}$ is obtained by choosing
elements of orders $8$, $12$, $18$, $20$, $30$, $45$.

\begin{proposition}
The clique number of $D(G)$ is equal to the maximum clique number of a cyclic
subgroup of $G$, so is determined by the set of orders of elements of $G$.
\end{proposition}

This is now clear from our earlier remarks.

\begin{corollary}
The graph $D(G)$ is triangle-free if and only if the order of every element of
$G$ is a prime power of the form $p^kq$ where $p,q$ are primes.
\end{corollary}

\begin{proof}
Let $g$ be an element of $G$ of order $n$. If $n$ is not of the forms in the
Corollary, either it has three prime divisors $p,q,r$, or it is divisible by
$p^2q^2$ for some prime $q$. In the first case, elements of orders $p,q,r$
in $\langle g\rangle$ form a triangle; in the second, elements of orders
$p^2,pq,q^2$ form a triangle.
\end{proof}

We note that the chromatic number of $D(G)$ may be larger than the clique
number. This example is taken from \cite{cp}. In the symmetric group $S_8$,
the orders of elements are $1$, $2$, $3$, $4$, $5$, $6$, $7$, $8$, $10$, $12$
and $15$. By the Corollary above, the clique number of $D(S_8)$ is $2$. But
$D(S_8)$ is not bipartite, since it contains a $5$-cycle
\[\{(1,2), (3,4,5), (6,7), (1,2,3), (4,5,6,7,8)\}.\]

\section{Perfectness and other properties}

It is known that the power graph of a finite group is perfect (see \cite{power-perfect}). On the other hand, the question ``For which groups is the enhanced power graph perfect?'' is still unresolved. In this section, we discuss perfectness of $D(G)$. We also say something about the related problem of when $D(G)$ is a
cograph.

\subsection{Graph classes, induced subgraphs and twin reduction}

The \emph{clique number} of a graph is the size of the largest complete
subgraph, and the \emph{chromatic number} is the smallest number of colours
required to colour the vertices so that adjacent vertices are given different
colours. The clique number does not exceed the chromatic number since, in a
proper colouring, all vertices of a clique are given different colours. 
A graph $\Gamma$ is \emph{perfect} if every induced subgraph of $\Gamma$ has
clique number equal to chromatic number. The \emph{strong perfect graph
theorem}, conjectured by Berge and proved by
Chudnovsky \emph{et al.}~\cite{PGT}, states that a graph is perfect if and
only if it does not contain either an odd cycle or the complement of an odd
cycle as an induced subgraph. It follows that a graph is perfect if and only
if its complement is perfect. This statement, known as the \emph{weak perfect
graph theorem}, was proved earlier by Lov\'asz. A number of graph classes
are known to be perfect, including bipartite graphs and comparability graphs
of partial orders.

Several other classes of graphs also have characterizations in terms of
forbidden induced subgraphs. Among these, we will only consider the class
of \emph{cographs}, graphs which contain no induced subgraph isomorphic to the
$4$-vertex path $P_4$. Since $P_4$ is isomorphic to its complement and any
cycle of length greater than $4$ contains an induced $P_4$, we see that cographs
are perfect. Cographs form the smallest class of
graphs which can be built from the $1$-vertex graph by the operations of
complementation and disjoint union.

Any class of graphs defined by forbidden induced subgraphs is subgraph-closed.
We will use two tools to investigate when difference graphs are perfect or
belong to one of the other classes:
\begin{enumerate}
\item If $H$ is a subgroup of $G$, then the induced subgraph of $D(G)$ on
the set $H$, after removing isolated vertices, is $D(H)$. So the class
of groups for which the difference graph belongs to one of the above classes
is subgroup-closed.
\item Two vertices of a graph are \emph{twins} if they have the same
neighbours (possibly excluding each other). The process of \emph{twin
reduction} involves finding a pair of twins and identifying them, and
continuing until no further twins remain. The result of twin reduction is
(up to isomorphism) independent of the process of reduction, and is called
the \emph{cokernel} of the graph (since $\Gamma$ is a cograph if and only if
its cokernel is the $1$-vertex graph). The cokernel is an induced subgraph
of the original graph and (in the cases we consider) is often much smaller
and more amenable to analysis. The important fact is given in the next result.
\end{enumerate}

\begin{proposition}
Let $\mathcal{F}$ be a class of finite graphs, and suppose that no graph in
$\mathcal{F}$ possesses a pair of twin vertices. Then a graph $\Gamma$ has
no induced subgraph in $\mathcal{F}$ if and only if the same applies to the
cokernel of $\Gamma$.
\end{proposition}

This result applies to perfect graphs and to cographs. 

\subsection{Perfect difference graphs}

\begin{proposition}
The difference graph of a cyclic group is perfect.
\label{cyclic-perfect}
\end{proposition}

\begin{proof}
The enhanced power graph of a cyclic group is complete, so the difference
graph is simply the complement of the power graph, which as we saw is
perfect. Now we can invoke the Weak Perfect Graph Theorem.
\end{proof}

Moreover, we can determine the cyclic groups for which the difference graph
is a cograph: for the class of cographs is self-complementary, and the
nilpotent groups whose power graph is a cograph were determined in
\cite[Theorem 12]{MCM}. The result is:

\begin{proposition}
The difference graph of a cyclic group $\mathbb{Z}_n$ is a cograph if and
only if either $n$ is a prime power or $n$ is the product of two distinct
primes.
\end{proposition}

This result does not extend to abelian groups. We saw that the difference
graphs of abelian groups, even those which are direct products of two
isomorphic cyclic groups, are universal, and in particular, we can find one
which embeds a $5$-cycle.

\begin{theorem}\label{pqr-perfect}
Let $G$ be a group of order $pq$, $p^2q$, $p^3q$, $p^2q^2$, or $pqr$,
where $p,q,r$ are distinct primes. Then $D(G)$ is perfect.
\end{theorem}

\begin{proof}
By Theorem~\ref{cyclic-perfect}, we may assume that $G$ is not cyclic. Also, we
may assume that $G$ is not an EPPO group (one with all elements of prime power
order), since for such a group $G$ the graph $D(G)$ has no edges.

Let $G$ be a group of order $pq$. Then either $G$ is cyclic or it is an EPPO
group, and the result follows.

Let $G$ be a group of order $p^2q$. We may assume that $G$ is non-cyclic but
has elements of order $pq$.
We claim that elements of order $pq$ are isolated in $D(G)$.
If not, let $x$ be an element of order $pq$ which is adjacent to a vertex $y$. Then from the adjacency condition of difference graph, $o(y)=p^2$. But this implies $\langle x,y\rangle$ is a cyclic group of order $p^2q$ in $G$, a contradiction.
Thus any edge in $E(G)$ must join elements of orders $p$ and $q$. So $E(G)$ is
bipartite (with the sets of elements of these orders as bipartite sets) and
hence perfect.

Next let $G$ be a group of order $p^3q$, and suppose that $G$ is not cyclic.
The possible orders of elements of $G$ are $p$, $p^2$, $p^3$, $q$, $pq$, or
$p^2q$. Elements of order $p^2q$ cannot be adjacent to elements of order 
dividing $p^2q$, or to elements of order $p^3$; so they are isolated. Thus
any edge of $E(G)$ must join a vertex of order a power of $p$ with one of 
order $q$ or $pq$; so the graph is bipartite, and hence perfect.

Now let $G$ be a non-cyclic group of order $p^2q^2$, where $p>q$. There is no
element of order $p^2q^2$, and arguing as above we see that elements of orders
$p^2q$ or $pq^2$ are isolated, and elements of orders $p^2$ and $q^2$ cannot
be adjacent. We can assume that $G$ is not an EPPO group; so it contains
elements of order $pq$. Moreover, we can assume there are elements of orders
$p^2$ and $q^2$. For, if there are no elements of order $q^2$, then all edges
join elements with order divisible by $q$ to elements with order a power of $p$, and the graph is bipartite, and hence perfect. Hence the Sylow subgroups of $G$
are cyclic. Now there is a normal $q$-complement, which is cyclic of order
$p^2$ (unless $p=3$ and $q=2$). Now an element of order $q$ or $q^2$ which acts nontrivially on a cyclic
group of order $p^2$ must have trivial centralizer there. So either the group
$G$ is cyclic, or there is no element of order $pq^2$. So elements of order
$q^2$ are isolated, and $D(G)$ is bipartite by the same argument as before.

Finally, let $G$ be a non-cyclic group of order $pqr$.
As in the previous cases, we can show that elements of order $pq, pr$ and $qr$ are isolated in $D(G)$. (If an $o(x)=pq$ and $x$ is joined to $y$, then if
$r\mid o(y)$ then $\langle x,y\rangle$ is cyclic of order $pqr$, while if
$r\nmid o(y)$ then $y$ is a power of $x$.) So all edges join elements of
distinct prime orders. If at most two of $p,q,r$ occur as orders of elements
then the graph is bipartite, and hence perfect; so all three occur. Now if
$p>q>r$, then $G$ contains a normal Sylow $p$-subgroup $P$, and $P$ commutes
with elements of orders $q$ and $r$, so is central in $G$; thus $G$ is the
direct product of $\mathbb{Z}_p$ with a group $H$ of order $qr$, which by
assumption is cyclic. So $G$ is cyclic, a contradiction.

In the case when $p=3$ and $q=2$, $G$ is of order $36$. There are $14$ non-isomorphic groups of order $36$ and it can be checked using \textsf{GAP}~\cite{GAP4} that all of them yield perfect difference graphs. This completes the proof.
\end{proof}

\subsection{Imperfect difference graphs}

Now we show that some groups have imperfect difference graphs.

\begin{proposition}
\begin{enumerate}
\item For any three distinct primes $p$, $q$, $r$, the group
$\mathbb{Z}_{pqr}\times\mathbb{Z}_p$ has imperfect difference graph.
\item For two distinct odd primes $p$ and $q$, the group
$Q_8\times\mathbb{Z}_{pq}$, where $Q_8$ is the quaternion group of order~$8$,
has imperfect difference graph.
\end{enumerate}
\end{proposition}

(Note that all proper subgroups of these groups have perfect difference graphs.)

\begin{proof}
(a) Let $a,b,c$ be elements of $\mathbb{Z}_{pqr}$ with orders $p$, $q$, $r$
respectively, and $a'$ an element of $\mathbb{Z}_p$. Then the set
\[\{(c,e),(b,e),(ac,e),(bc,e),(b,a')\}\]
induces a $5$-cycle in $D(G)$.
\smallskip
(b) Let $a,a'$ be non-commuting elements of order $4$ in $Q_8$, and $b$ and
$c$ elements of orders $p$ and $q$. Then the set
\[\{(e,c),(e,b),(a,c),(e,bc),(a',b)\}\]
induces a $5$-cycle in $D(G)$.
\end{proof}

Now we can determine which nilpotent groups have perfect difference graph. Let
$\pi(G)$ be the number of distinct prime divisors of $G$. We may assume that
$\pi(G)>1$.

\begin{theorem}\label{nilpotent-perfect-theorem}
Let $G$ be a finite nilpotent group.
\begin{enumerate}
\item If $\pi(G)\geq 3$, then $D(G)$ is perfect if and only if $G$ is cyclic.
\item If $\pi(G)=2$, then $D(G)$ is a comparability graph, and hence perfect.
\end{enumerate}
\end{theorem}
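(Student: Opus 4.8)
My plan is to treat the three parts separately, using throughout that a finite nilpotent group is the direct product $G=P_1\times\cdots\times P_k$ of its Sylow subgroups, together with two elementary facts for factors of pairwise coprime orders: $\langle a,b\rangle$ is cyclic iff each coordinate projection $\langle a_i,b_i\rangle$ is cyclic, and $a\in\langle b\rangle$ iff $a_i\in\langle b_i\rangle$ for every $i$ (the backward directions coming from the Chinese Remainder Theorem). Part (1) is then immediate: if every $P_i$ is cyclic, $G$ is a product of cyclic groups of pairwise coprime orders, hence cyclic, and $D(G)$ is perfect by Theorem \ref{cyclic-perfect}. The real content is in (2) and (3).

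For part (3), write $G=P\times Q$ with $|P|$ a power of $p$ and $|Q|$ a power of $q$. I would prove $D(G)$ is a comparability graph by giving a transitive orientation. The key point is that if $a\sim b$ then $\langle a,b\rangle$ is cyclic, so $\langle a_P,b_P\rangle$ and $\langle a_Q,b_Q\rangle$ are cyclic; as the subgroups of a cyclic group form a chain, $\langle a_P\rangle,\langle b_P\rangle$ are comparable and so are $\langle a_Q\rangle,\langle b_Q\rangle$. Since $a,b$ are not powers of one another, the two comparisons cannot point the same way, and a short case analysis forces them to be \emph{strict} and \emph{opposite}, say $\langle a_P\rangle\subsetneq\langle b_P\rangle$ and $\langle b_Q\rangle\subsetneq\langle a_Q\rangle$. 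I then orient each edge by $a\to b$ whenever $\langle a_P\rangle\subsetneq\langle b_P\rangle$. If $a\to b$ and $b\to c$, then $\langle a_P\rangle\subsetneq\langle b_P\rangle\subsetneq\langle c_P\rangle$ and $\langle c_Q\rangle\subsetneq\langle b_Q\rangle\subsetneq\langle a_Q\rangle$, whence $\langle a,c\rangle$ is cyclic with strictly opposite containments, so $a\sim c$ and $a\to c$. Thus the orientation is transitive, $D(G)$ is a comparability graph, and therefore perfect.

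For part (2), I would exhibit an induced $5$-cycle, which certifies imperfectness. Since some Sylow subgroup, say the $p$-Sylow $P$, is non-cyclic, and a finite $p$-group in which every $2$-generated subgroup is cyclic is itself cyclic, there exist $g_1,g_2\in P$ with $\langle g_1,g_2\rangle$ non-cyclic. As $|\pi(G)|\ge 3$, besides $p$ there are two further prime divisors $q,r$ of $|G|$; pick $\omega_q,\omega_r\in G$ of orders $q,r$ in the corresponding Sylow subgroups. I claim the five elements $B=(g_1,\omega_q,1)$, $T_{qr}=(1,\omega_q,\omega_r)$, $C=(g_2,1,\omega_r)$, $T_q=(1,\omega_q,1)$, $T_r=(1,1,\omega_r)$ (displayed in the factors $P\times(q\text{-Sylow})\times(r\text{-Sylow})$, with identity in all remaining factors) induce the cycle $B\sim T_{qr}\sim C\sim T_q\sim T_r\sim B$. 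Each edge lives inside a cyclic subgroup of the shape $\langle g_i\rangle\times\mathbb{Z}_q\times\mathbb{Z}_r$ and follows from Proposition \ref{coprime} or Corollary \ref{nilpotent-tool}; the diagonal non-edge $B\not\sim C$ comes from $\langle g_1,g_2\rangle$ being non-cyclic, and the remaining four non-edges come from the containments $T_q\in\langle B\rangle\cap\langle T_{qr}\rangle$ and $T_r\in\langle C\rangle\cap\langle T_{qr}\rangle$, which are coprimality computations. By Lemma \ref{subgroup-induced} this $C_5$ sits as an induced subgraph of $D(G)$, so $D(G)$ is imperfect; when $P$ contains $\mathbb{Z}_p\times\mathbb{Z}_p$ the ambient subgroup may be taken to be $\mathbb{Z}_{pqr}\times\mathbb{Z}_p$, matching Remark \ref{imperfect-remark}.

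The main obstacle is the verification in (2): one must confirm all ten adjacency/non-adjacency relations among the proposed vertices and, crucially, that $\langle g_1,g_2\rangle$ being non-cyclic is the only source of the required diagonal non-edge, so that these five elements induce \emph{exactly} a $C_5$ with no extra chords. The transitive-orientation bookkeeping in (3) is the other delicate point, essentially the case check that both coordinate containments must be strict and opposite along every edge.
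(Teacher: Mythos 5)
Your proposal is correct, and parts (1) and (3) run essentially parallel to the paper: part (1) is the same one-line reduction to Theorem \ref{cyclic-perfect}, and part (3) is the same comparability-graph idea, though your single uniform orientation rule ($a\to b$ iff $\langle a_P\rangle\subsetneq\langle b_P\rangle$, having first shown the two coordinate containments along any edge are strict and opposite) is cleaner than the paper's four-case definition split over ``type-I'' and ``type-II'' elements; the two are easily seen to be equivalent in substance.

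Part (2) is where you genuinely diverge, and in a useful direction. The paper argues that a non-cyclic Sylow subgroup forces $G$ to contain a copy of $\mathbb{Z}_{pqr}\times\mathbb{Z}_p$, and then invokes Theorem \ref{minimal-imperfect-1} together with Lemma \ref{subgroup-induced}. That reduction silently uses the fact that a non-cyclic $p$-group contains $\mathbb{Z}_p\times\mathbb{Z}_p$, which fails exactly for generalized quaternion $2$-groups; for $G=Q_8\times\mathbb{Z}_q\times\mathbb{Z}_r$ there is no subgroup isomorphic to $\mathbb{Z}_{2qr}\times\mathbb{Z}_2$, so the paper's proof as written does not cover this case. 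Your construction only needs two elements $g_1,g_2$ of the non-cyclic Sylow subgroup with $\langle g_1,g_2\rangle$ non-cyclic (which always exist), and your five vertices $(g_1,\omega_q,1)$, $(1,\omega_q,\omega_r)$, $(g_2,1,\omega_r)$, $(1,\omega_q,1)$, $(1,1,\omega_r)$ do induce a $C_5$: all five edges and all five non-edges check out (the diagonal $(g_1,\omega_q,1)\not\sim(g_2,1,\omega_r)$ because the projection of the generated subgroup to the $p$-part is $\langle g_1,g_2\rangle$, and the remaining four non-edges because one endpoint is a power of the other via the Chinese Remainder Theorem). So your argument is strictly more general than the paper's and repairs the quaternion gap; the paper's version, when it applies, has the advantage of identifying $\mathbb{Z}_{pqr}\times\mathbb{Z}_p$ as a \emph{minimal} imperfect group, which is reused in Remark \ref{imperfect-remark}.
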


\begin{proof}
Suppose that $\pi(G)\geq 3$. If $G$ is cyclic then the theorem follows from Theorem \ref{cyclic-perfect}. If $G$ has a generalized quaternion Sylow
$2$-subgroup, then it contains a subgroup $Q_8\times\mathbb{Z}_{pq}$ for odd
primes $p$ and $q$. Otherwise, by a theorem of Burnside, at least one Sylow
subgroup contains commuting elements of prime order (say $p$), and $G$ contains a subgroup $\mathbb{Z}_{pqr}\times\mathbb{Z}_p$. In either of the last two
cases, $D(G)$ is imperfect, by the preceding Proposition.

If $\pi(G)=2$, then $G\cong H \times K$ where $H$ is the Sylow $p$-subgroup and $K$ is the Sylow $q$-subgroup of $G$. Thus any element of $g \in G$ can be uniquely expressed as $ab$, where $a \in H$ and $b \in K$. Also, order of any element in $G$ is either a power of $p$ or a power of $q$ or product of powers of $p$ and $q$. In the first two cases, we call it a element or vertex of type-I and the last case is denoted by type-II.

       Define a relation $\rightarrow$ on $V(D(G))$ as follows:
        \begin{itemize}
                \item If $x_1,x_2$ are of type-I, then $x_1\rightarrow x_2$ if $\circ(x_1)$ is a power of $p$ and $\circ(x_2)$ is a power of $q$.
                \item If $x_1=a_1b_1,x_2=a_2b_2$ are of type-II, then $x_1\rightarrow x_2$ if $\langle a_2 \rangle \leq \langle a_1 \rangle$ and $\langle b_1 \rangle \leq \langle b_2 \rangle$ and at least one of the inequality is strict.
                \item If $x_1$ is of type-I with $o(x_1)$ is a power of $p$ and $x_2=a_2b_2$ is of type-II, then $x_1\rightarrow x_2$ if $\langle a_2 \rangle < \langle a_1 \rangle$.
                \item If $x_1$ is of type-I with $\circ(x_1)$ is a power of $q$ and $x_2=a_2b_2$ is of type-II, then $x_2\rightarrow x_1$ if $\langle b_2 \rangle < \langle b_1 \rangle$.
        \end{itemize}
    It is easy to check that $\rightarrow$ is anti-symmetric and transitive on $V(D(G))$. The comparability graph of $\rightarrow$ on $V(D(G))$ is given by $x \sim y$ if and only if $x\rightarrow y$ or $y \rightarrow x$. It can be checked that $D(G)$ coincides with the comparability graph of $\rightarrow$ on $V(D(G))$. Hence the theorem follows.
\end{proof}

By arguments similar to those already given, the following results can be shown.
Suppose that $q$ and $r$ are primes with $r\mid q-1$. Then $\mathbb{Z}_q$ has
an automorphism $\alpha$ of order $r$. We define an action $\varphi$ of
$\mathbb{Z}_{r^2}$ on $\mathbb{Z}_q$ where the generator of $\mathbb{Z}_{r^2}$
induces the automorphism $\alpha$. We say that a group $G$ is \emph{minimal
imperfect} if its difference graph is imperfect but, for all proper subgroups
$H$ of $G$, $D(H)$ is perfect.

\begin{theorem}
\begin{enumerate}
\item Let $p,q,r$ be three distinct primes such that $r|q-1$. Then $\mathbb{Z}_p \times (\mathbb{Z}_q \rtimes_\varphi \mathbb{Z}_{r^2})$, where $\varphi$ is defined as above, is a minimal imperfect group.
\item Let $q,r$ be two distinct primes such that $r\mid q-1$. Then $\mathbb{Z}_{q^2} \times (\mathbb{Z}_q \rtimes_\varphi \mathbb{Z}_{r^2})$, where $\varphi$ is defined as above, is a minimal imperfect group.
\end{enumerate}
\end{theorem}

Now we turn to finite simple groups, and first show:

\begin{theorem}
The symmetric group $S_8$, the alternating group $A_9$, and the Janko group
$J_1$ all have imperfect difference graphs.
\end{theorem}

\begin{proof}
In the first two cases we can give an explicit induced $5$-cycle:
\begin{itemize}
\item in $S_8$, the set $\{(1,2), (3,4,5), (6,7), (1,2,3), (4,5,6,7,8)\}$ 
induces a $5$-cycle;
\item in $A_9$, the set
\[\{(1,2,3),(4,5)(6,7),(8,9,1),(2,3)(4,5),(6,7,8),(9,1)(2,3),(4,5,6,7,8)\}\]
induces a $5$-cycle.
\end{itemize}
For $J_1$ we proceed as follows. All information we require is given in the
$\mathbb{ATLAS}$ of Finite Groups~\cite{ATLAS}.

The order of the group is $2^3.3.5.7.11.19$,
and the Sylow $2$-subgroup is elementary abelian.  Elements of orders $7$, $11$
and $19$ commute only with their powers, so are isolated, and deleted in the
difference graph; so all the vertices have orders $2$, $3$ and $5$, and 
vertices which are joined must have different orders. So the graph is tripartite
(that is, has a $3$-colouring). Moreover, there is no element of order $30$, so
the clique number is $2$. To show it is not perfect, we just have to show that
it is not bipartite.

The group contains a subgroup $D_3\times D_5$, and so has a path of length~$3$
joining two commuting involutions, with the elements having orders $2,3,5,2$.
Now take a subgroup isomorphic to the Klein group, and join its elements by
such paths. This produces a closed walk of length $9$, so indeed the graph is
non-bipartite.
\end{proof}

\subsection{Simple groups}

We have some partial results on the question ``Which finite simple groups
have perfect difference graphs?'' According to the preceding section, any
simple group which contains the symmetric group $S_8$, the alternating group
$A_9$ or the Janko group $J_1$ has imperfect difference graph. Among the
sporadic groups, this list includes the Fischer groups, the Baby Monster and
the Monster, the Harada--Norton group, the Conway group $Co_1$, the Thompson
group, the Lyons group, the Higman--Sims group, and the O'Nan group. 
Moreover, of course, the alternating groups $A_n$ for $n\ge9$, and the groups
of Lie type of rank at least $9$ also contain $S_8$ or $A_9$ and so have
imperfect difference graph.

On the other hand, we have:

\begin{theorem}
Let $G$ be the simple group $\mathrm{PSL}(2,q)$ (for prime power $q\ge4$) or
$\mathrm{Sz}(q)$ (for $q$ an odd power of $2$). Then $G$ has perfect difference
graph.
\end{theorem}

\begin{proof}
The simplest cases to deal with are $\mathrm{PSL}(2,q)$ for $q$ a power of $2$
and $\mathrm{Sz}(q)$. These groups have the property that the centralizer of
any element is either cyclic or a $2$-group; and, moreover, distinct 
centralizers meet only in the identity. So the difference graph consists of
isolated vertices together with a disjoint union of difference graphs of cyclic
groups, and so (by Theorem~ \ref{cyclic-perfect}) it is perfect.

Indeed, we also see that for these groups, the difference graph is a cograph
(or a threshold or split graph) if and only if the difference graphs of all
the cyclic subgroups are. So, if $q$ is a power of $2$, then
\begin{itemize}
\item $D(\mathrm{PSL}(2,q))$ is a cograph if and only if each of $q-1$ and
$q+1$ is a prime power or the product of two distinct primes;
\item For $q$ an odd power of~$2$, $D(\mathrm{Sz}(q))$ is a cograph if and only
if each of $q-1$, $q+\sqrt{2q}+1$ and $q-\sqrt{2q}+1$ is either a prime
power or the product of two primes.
\end{itemize}

Consider $\mathrm{PSL}(2,q)$ with $q$ odd. In this group, if elements
$x$ and $y$ have different centralizers, then they are not adjacent in the
difference graph; for, if they commute, they must both be involutions. Thus it
is again true that $D(G)$ is the disjoint union of isolated vertices and the
difference graphs of element centralizers (which are cyclic, dihedral, or
elementary abelian). Moreover, we have a similar characterization of the case
where the difference graph is a cograph: both $(q+1)/2$ and $(q-1)/2$ must be
prime powers or products of two distinct primes.
\end{proof}

The result does not extend to all groups of Lie type of rank $1$. For
example, let $G$ be the Ree group ${}^2G_2(q)=R_1(q)$, where $q$ is an odd
power of $3$. Then $(q+1)/2$ is twice an odd number. Suppose that $(q+1)/2$
has two distinct prime divisors $p$ and $r$. The centralizer of an involution
$t$ in $G$ is isomorphic to $\mathbb{Z}_2\times\mathrm{PSL}(2,q)$, and so
contains a subgroup $\mathbb{Z}_2\times\mathbb{Z}_{2pr}$, whose difference
graph is not perfect, by Theorem~\ref{pqr-perfect}. (It can be shown that, if
$(q+1)/2$ is twice an odd prime power, then $D(R_1(q))$ is perfect; but we
do not give the argument here.)

Further, the difference graphs of the Ree groups are never cographs. For
let $t$ be an involution in $G=R_1(q)$, so that
$C_G(t)\cong\langle t\rangle\times\mathrm{PSL}(2,q)$. As noted above,
$q$ is an odd power of $3$, and $(q-1)/2$ (the order of a cyclic subgroup
of $\mathrm{PSL}(2,q)$) is twice an odd number. Let $p$ be an odd prime
dividing $(q+1)/2$, and let $u$ and $v$ be elements of order $p$ and $2$ in
a cyclic group of order $(q+1)/2$; let $s$ be an element of order $3$ in
$\mathrm{PSL}(2,q)$. Then $\{s,t,u,v\}$ induces a path of length $3$.

This can be seen another way. The (non-simple) smallest Ree group $R_1(3)$
is isomorphic to $\mathrm{P}\Gamma\mathrm{L}(2,8)$, and is contained in all
other Ree groups. Computation shows that the cokernel of its difference graph
is non-trivial. It has $147$ vertices and is bipartite, with bipartite sets
of sizes $63$ and $84$; it has diameter $6$ and girth $10$.

We have not examined the fourth type of rank~$1$ group, the unitary groups
$\mathrm{PSU}(3,q)$, except to note that computation shows that their difference
graphs are perfect for $q=3,4,5$.

Turning to groups of Lie type of rank greater than~$1$, we have the following:

\begin{proposition}
Let $q$ be a prime power, and assume that $q-1$ has at least three distinct
prime divisors. Then $\mathrm{PSL}(3,q)$ has imperfect difference graph.
\end{proposition}

\begin{proof}
Let $F$ be the field of $q$ elements. The subgroup of diagonal matrices in
$\mathrm{SL}(3,q)$ is isomorphic to $F^\times\times F^\times$, under the
map
\[\begin{pmatrix}a&&\\&b&\\&&c\\\end{pmatrix}\mapsto (a,b)\]
(since $abc=1$). This is also a subgroup
of $\mathrm{PSL}(3,q)$ if $3\nmid q-1$, whereas if $3\mid q-1$ then we take
the quotient by the cyclic group of order $3$. With our hypothesis, in either
case we have a subgroup $\mathbb{Z}_{plr}\times\mathbb{Z}_p$, where $p,l,r$ are
distinct primes. The result now follows from Theorem~\ref{pqr-perfect}.
\end{proof}

We note that the argument shows also that, under the same hypothesis on $q$,
$\mathrm{SL}(3,q)$ has imperfect
difference graph. Now many groups of Lie type contain either $\mathrm{PSL}(3,q)$
or $\mathrm{SL}(3,q)$ as a subgroup; in particular, all those of rank at
least~$3$, as we may see by looking at the Coxeter--Dynkin diagrams. In
addition, the group $G_2(q)$ contains $\mathrm{SL}(3,q)$. So for these values
of $q$, the difference graphs of these groups are imperfect.

\subsection{Computational results}

We examined small simple groups computationally using \textsf{GAP}~\cite{GAP4}
and its share package \texttt{GRAPE}~\cite{grape}. We first constructed the
difference graph of the simple group. Then we deleted isolated vertices and
performed twin reduction, leading to some great reductions in number of
vertices.

For groups of order less than that of $J_1$, all had perfect difference graphs.
Indeed, we found the following.
\begin{itemize}
\item \textbf{Simple groups $G$ for which $D(G)$ has no edges:} These are the
simple EPPO groups: $\mathrm{PSL}(2,q)$ for $q=4,7,8,9,17$,
$\mathrm{Sz}(q)$ for $q=8,32$, and $\mathrm{PSL}(3,4)$.
\item \textbf{Simple groups $G$ for which $D(G)$ has edges but is a cograph,
so that the cokernel has a single vertex:} Some further
$\mathrm{PSL}(2,q)$ and $\mathrm{Sz}(q)$, depending on number-theoretic
properties of $q$ (e.g. in our range $\mathrm{PSL}(2,q)$ for $q=11, 13, 16$).
\item \textbf{Simple groups for which $D(G)$ is not a cograph but its cokernel
is bipartite:} some further $\mathrm{PSL}(2,q)$ (e.g. $q=23, 25$),
$\mathrm{PSL}(3,3)$, $\mathrm{PSU}(3,3)$, $M_{11}$, $A_8$, $\mathrm{PSU}(4,2)$,
$\mathrm{PSU}(3,4)$, $M_{12}$, $\mathrm{PSU}(3,5)$.
\end{itemize}

Some of the cokernels in the above list turn out to be very interesting graphs.
For example,
\begin{itemize}
\item \textbf{The Mathieu group $M_{11}$:}
In this case, removal of isolated vertices and twin reduction brings the
number of vertices down from $7920$ to $385$. The resulting graph is bipartite,
with bipartite sets of sizes $165$ and $220$, and the vertices in the two
sets have valencies $4$ and $3$ respectively. The graph has diameter $10$ and
girth $10$; the girth is rather large for a graph if this size. The automorphism
group of the graph is just $M_{11}$.
\item\textbf{The group $\mathrm{PSL}(3,3)$:}
In this case, we found a very natural graph which has not been studied, as far
as we are aware. The vertices are the ordered pairs
$(P,L)$, where $P$ is a point and $L$ a line of the projective plane of 
order $3$ (so $169$ vertices). The pairs fall into two types, \emph{flags}
($P$ incident with $L$) and \emph{antiflags} ($P$ not indident with $L$).
The graph is bipartite: each edge joins a flag to an antiflag. The rule for
adjacency is as follows: the flag $(P,L)$ is incident with the antiflag
$(Q,M)$ if $Q\in L$ and $P\in M$. The automorphism of the graph is
$\mathrm{Aut}(\mathrm{PSL}(3,3)$.
\end{itemize}


\section{Conclusion and Open Issues}
In this paper, we studied the difference graph $D(G)$ of a finite group $G$. The study was mainly based on connectedness and perfectness of such graphs. Some of the problems which arise from this work can be interesting topics of further research.

For a finite group $G$ with non-trivial center, it was shown that $D(G)$ is connected and with diameter less or equal to $6$. However for groups $G$ with trivial center, $D(G)$ may or may not be connected. So the question arises:
\begin{question}
If $G$ has trivial center and $D(G)$ is connected, can $\diam(D(G))$ be greater than $6$? 
\end{question}

\begin{question}
Complete the classification of finite groups whose difference graph is
perfect.
\end{question}

\begin{question}
Find necessary and sufficient conditions on a complete graph with edges
coloured red, green and blue for it to be embeddable in a finite group $G$
such that
\begin{enumerate}
\item red edges are adjacent in the power graph of $G$;
\item green edges are adjacent in the difference graph (that is, in the
enhanced power graph but not in the power graph); and
\item blue edges are non-adjacent in the enhanced power graph.
\end{enumerate}
Necessary conditions are that the red edges form the comparability graph of
a partial order, and if $x$ and $y$ are joined by a green edge then there is
a point $z$ joined to both by green edges. Are these conditions sufficient?
(A similar result for the enhanced power graph and commuting graph was proved
in \cite{survey}.)
\end{question}

\section*{Acknowledgement}
The first author is supported by the PhD fellowship of CSIR (File no. $08/155$ $(0086)/2020-EMR-I$), Govt. of India. The second author acknowledges the Isaac Newton Institute for Mathematical Sciences, Cambridge, for support and hospitality during the programme \textit{Groups, representations and applications: new perspectives}
(supported by \mbox{EPSRC} grant no.\ EP/R014604/1), where he held a Simons
Fellowship. The third author acknowledges the funding of DST grant $SR/FST/MS-I/2019/41$, Govt. of India. The fourth author acknowledges SERB-National Post-Doctoral Fellowship (File No. PDF/2021/001899) during the preparation of this work and profusely thanks Science and Engineering Research Board for this funding.

\medskip

\end{document}